\theoremstyle{plain}
\newtheorem{definition}{Definition}[section]
\newtheorem{theorem}[definition]{Theorem}
\newtheorem*{theorem*}{Theorem}
\newtheorem*{remark*}{Remark}
\newtheorem*{sideremark*}{Side Remark}
\newtheorem*{claim*}{Claim}
\newtheorem*{q*}{Question}
\newtheorem{lemma}[definition]{Lemma}
\newtheorem*{corollary*}{Corollary}
\newtheorem{proposition}[definition]{Proposition}
\newcommand{\rea}{\mathbb{R}}
\newcommand{\ir}{\int_{\mathbb{R}}}
\newcommand{\supt}{\sup_{[0,T]\times \mathbb{R}}\theta}
\newcommand{\dd}{{\rm d}}
\newcommand{\dx}{\,\dd x}
\newcommand{\dt}{\, \dd t}
\numberwithin{equation}{section}
\numberwithin{figure}{section}
\title{On One-Dimensional Compressible Navier-Stokes Equations For A Reacting Mixture In Unbounded Domains}
\author{Siran Li}
\address{Siran Li: Mathematical Institute, University of Oxford, Oxford, OX2 6GG, UK}
\email{\texttt{siran.li@maths.ox.ac.uk}}
\keywords{Navier-Stokes Equations, Compressible,  Reacting Mixture, Combustion, Global Existence, Uniform Estimates, Large-Time Behaviour}
\subjclass[2010]{Primary: 35Q30, 35Q35, 35Q79; Secondary: 76N10, 76N15}
\date{\today}
\begin{document}

\maketitle

\begin{abstract}

In this paper we consider the one-dimensional Navier-Stokes system for a heat-conducting, compressible reacting mixture  which describes the dynamic combustion of fluids of mixed kinds on unbounded domains. This  model has been discussed on bounded domains by Chen \cite{C92} and Chen-Hoff-Trivisa \cite{CHT03} among others, in which the reaction rate function is a discontinuous function obeying the Arrhenius Law. We prove the global existence of weak solutions to this model on one-dimensional unbounded domains with large initial data in $H^1$. Moreover, the large-time behaviour of the weak solution is identified and proved. In particular, the uniform-in-time bounds for the temperature and specific volume have been established via energy estimates. For this purpose we utilise techniques developed by Kazhikhov and coauthors ({\it cf.} \cite{K82,SK81}), as well as a crucial estimate in the recent work by Li-Liang \cite{LL14}. Several new estimates are also established, in order to treat the unbounded domain and the reacting terms.
\end{abstract}

\section{Introduction and Main Results}

The equations of motion for the  compressible fluids describing chemical reactions and radiative processes have been a heated research topic in fluid dynamics: {\it Cf.} \cite{LL14, C92, CHT03, CK, DT, DZ, wang} and the references cited therein. In the current work we are concerned with the global existence and large-time behaviour of global solutions to the compressible Navier-Stokes equations for a reacting mixture on one-dimensional unbounded domains. Our system describes the physical process of dynamic combustion, for which the reacting rate function is discontinuous and obeys the {\em Arrhenius Law} of molecular thermodynamics. 

Following Chen (\cite{C92}) in which the explicit transform from Euler to Lagrangian coordinates has been computed, in this paper our analysis for the compressible Navier-Stokes equations will be carried out in the Lagrangian coordinates, {\it i.e.},

\begin{equation}\label{eq_mass equation}
u_t - v_x =0,
\end{equation}
\begin{equation}\label{eq_momentum equation}
v_t+\big(\frac{a\theta}{u}\big)_x = \big(\frac{\mu v_x}{u}\big)_x,
\end{equation}
\begin{equation}\label{eq_temperature equation}
\big(\theta+\frac{v^2}{2}\big)_t + \big(\frac{av\theta}{u}\big)_x = \big(\frac{\mu vv_x + \kappa \theta_x}{u}\big)_x + qK\phi(\theta)Z,
\end{equation}
\begin{equation}\label{eq_reaction equation}
Z_t + K \phi(\theta)Z = \big(\frac{d}{u^2}Z_x\big)_x.
\end{equation}

In the above system we are solving for the four  dynamic variables $(u, v, \theta, Z)$, which represent the {\em specific volume, velocity, temperature, and mass fraction of the reactant}, respectively. The positive constants $\mu, \kappa, q, d, a$ and $K$ are the coefficients of bulk viscosity, heat conduction, species diffusion, difference in the internal energy of the reactant and the product, the product of Boltzmann's gas constant and the molecular weight, and the reaction rate, respectively. 

One distinctive feature of the above system consisting of Eqs. \eqref{eq_mass equation}--\eqref{eq_reaction equation} is the presence of $\phi(\theta)$, known as the {\em reaction rate function}.   Here $\phi:\rea\rightarrow [0,\infty)$ is a function of the temperature $\theta$ determined by the Arrhenius Law:
\begin{equation}
\phi(\theta)=\theta^\alpha e^{-\frac{A}{\theta}} \mathbbm{1}_{\{\theta>\theta_{\text{ignite}}\}},
\end{equation}
where $\alpha, A>0$ are thermodynamic constants, and $\theta_{\text{ignite}}>0$ is the threshold temperature which triggers the reaction. In particular, this function is discontinuous at $\theta_{\text{ignite}}$. To deal with the reaction rate function $\phi$, we first mollify it and derive uniform bounds for the resulting $C^1$ functions, and then pass to the limits to recover the discontinuous $\phi(\theta)$. Here we need the uniform boundedness of $\phi$, which is justified {\em a posteriori} via the uniform bounds for the other dynamical variables, {\it i.e.}, $(u,v,Z)$.

In this work we consider the Cauchy problem on the whole real line $\Omega = \rea$. More precisely, the {\em initial data} is prescribed as follows:
\begin{equation}\label{eq_initial data}
(u,v,\theta,Z)|_{t=0} = (u_0,v_0,\theta_0,Z_0),
\end{equation}
and the following {\em far-field condition} is imposed: 
\begin{equation}\label{eq_far-field condition}
\lim_{|x|\rightarrow \infty} (u,v,\theta,Z) (x,t) = (1,0,1,0) \qquad \text{ for all } t \geq 0.
\end{equation}
Physically, it means that at the endpoints of the reacting system the density is constant ({\it i.e.}, no formation of vacuum or density-concentration), and so is the temperature. Also, the endpoints are kept fixed for all the time, with no chemical reaction triggered there.

Moreover, the initial data are assumed to satisfy the following conditions: 
\begin{equation}\label{eq_conditions on id}
\begin{cases}
0<m_0\leq u_0(x),  \theta_0(x) \leq M_0 < \infty, \quad 0 \leq Z_0(x) \leq 1,\\
|v_0(x)| \leq M_0,\\
(u_0-1, v_0, \theta_0 - 1, Z_0) \in [H^1(\rea)]^4,
\end{cases}
\end{equation}
where $m_0, M_0$ are universal constants. The regularity condition in the last line is  referred to as the {\em large data} condition.

Now, let us introduce the notion of weak solutions to the compressible Navier-Stokes system of the reacting mixture, which is our main object of study in this work:

\begin{definition}\label{def_weak and classsical solutions}

The quadruplet $(u,v,\theta, Z):[0,T]\times\rea \rightarrow \rea^4$ is a {\bf weak solution} to the system \eqref{eq_mass equation}--\eqref{eq_conditions on id} if it satisfies the equations in the sense of distributions on $[0,T]\times\rea$, and satisfies the following regularity conditions:
\begin{equation*}
\begin{cases}
u-1 \in L^\infty(0,T; H^1(\rea)),\\
u_t \in L^2(0,T;L^2(\rea)),\\
v, \theta-1, Z \in L^\infty(0,T;H^1(\rea))\cap L^2(0,T; H^2(\rea));\\
v_t, \theta_t, Z_t \in L^2(0,T; L^2(\rea)).
\end{cases}
\end{equation*}
\end{definition}

The main results of the paper are summarised as follows:

First, assuming the local (in time) existence of weak solutions, we prove the global existence of weak solutions to Eqs.  \eqref{eq_mass equation}-\eqref{eq_conditions on id}. Along the way, the {\em uniform bounds} (in space-time) for the temperature and the specific volume are established:

\begin{theorem}\label{Theorem for global existence of weak solution}
	There exists a weak solution $(u,v,\theta,Z)$ to Eqs. \eqref{eq_mass equation}-\eqref{eq_conditions on id} on $[0,T]\times \rea$ for all $T>0$. Moreover, there is a universal constant 
	\begin{equation*}
C_0=C_0\Big(a, \mu, \kappa, q, K, d, \phi(\cdot), \|(u_0-1, v_0, \theta_0 - 1 , Z_0)\|_{H^1(\rea)}, \inf_{\rea}u_0, \inf_{\rea}\theta_0\Big)
	\end{equation*}
such that
	\begin{equation}
0 < C_0^{-1} \leq \theta(t,x), u(t,x) \leq C_0 < \infty  \quad \text{ and }\quad  0\leq Z(t,x) \leq 1
	\end{equation}
for almost all $(t,x) \in [0,T] \times \rea$. In particular, $C_0$ is independent of $T$.
\end{theorem}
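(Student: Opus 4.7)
The plan is to follow the Kazhikhov--Shelukhin scheme, adapted to the unbounded domain $\mathbb{R}$ by means of the Li--Liang pointwise estimate cited in the abstract, and to handle the discontinuous Arrhenius rate $\phi$ by a mollification/compactness argument. Concretely, I would first replace $\phi$ by a smooth approximant $\phi_\eta \in C^1$ with $0 \leq \phi_\eta \leq C$ uniformly, perform all a priori estimates on the resulting system (whose local-in-time solvability is assumed), with constants independent of $\eta$ and of $T$, and only at the end pass to the limit $\eta \to 0$. Global existence then follows by the standard continuation argument.

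The first layer of estimates is standard. Integrating \eqref{eq_temperature equation} plus $q$ times \eqref{eq_reaction equation} over $\mathbb{R}$ and using the far-field condition \eqref{eq_far-field condition} yields conservation of the total energy $\int (\theta - 1 + v^2/2 + qZ)\, dx$. The entropy identity, obtained by testing \eqref{eq_momentum equation} with $v$ and \eqref{eq_temperature equation} with $1 - 1/\theta$ and exploiting \eqref{eq_mass equation}, delivers a dissipation estimate of the form
\begin{equation*}
\sup_{[0,T]}\!\int_{\mathbb{R}}\!\!\Big[(u-\ln u-1) + \tfrac{v^2}{2} + (\theta-\ln\theta-1) + qZ\Big] dx + \int_0^T\!\!\!\int_{\mathbb{R}}\!\!\Big[\frac{\mu v_x^2}{u\theta} + \frac{\kappa \theta_x^2}{u\theta^2}\Big] dx\, dt \leq C.
\end{equation*}
The maximum principle applied to \eqref{eq_reaction equation}, with $K\phi_\eta(\theta)\geq 0$ acting as a nonnegative sink, gives $0 \leq Z \leq 1$; integrating \eqref{eq_reaction equation} then produces an $L^1_{t,x}$ bound on the reaction source $\phi_\eta(\theta)Z$.

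The core of the argument, and the main obstacle, is the pointwise control of $u$ and $\theta$. For the specific volume I would invoke Kazhikhov's explicit representation derived from \eqref{eq_mass equation}--\eqref{eq_momentum equation}, localised on $\mathbb{R}$ via a carefully chosen weight to substitute for the missing boundary terms; this gives $C_0^{-1} \leq u \leq C_0$ conditional on a uniform upper bound for $\theta$. The upper bound for $\theta$ is the genuinely hard step: on a bounded interval one would test \eqref{eq_temperature equation} with a power of $(\theta-1)_+$ and close the estimate by Poincar\'e, but here Poincar\'e is unavailable and in addition the combustion source $qK\phi_\eta(\theta)Z$ must be absorbed. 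The Li--Liang estimate from \cite{LL14} supplies exactly the missing ingredient, namely a pointwise oscillation bound for $\theta$ in terms of the dissipation integrals already controlled above, bypassing Poincar\'e. Together with the uniform boundedness of the Arrhenius factor $\theta^\alpha e^{-A/\theta}$ on any compact range of $\theta$ and the $L^1_{t,x}$ bound on $\phi_\eta(\theta)Z$, this feeds a Gr\"onwall loop that closes the $(u,\theta)$ bootstrap and produces the constant $C_0$ in the statement, independent of $T$.

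Once $u$ and $\theta$ are trapped in a compact interval the remaining $H^1$- and $H^2$-in-$x$ bounds of Definition \ref{def_weak and classsical solutions} are routine: test \eqref{eq_momentum equation}--\eqref{eq_reaction equation} against appropriate second-order quantities and use the pointwise bounds together with the dissipation integrals. All estimates are uniform in $\eta$, so Aubin--Lions yields strong convergence of a subsequence $(u_{\eta_k},v_{\eta_k},\theta_{\eta_k},Z_{\eta_k})$; the limit of $\phi_{\eta_k}(\theta_{\eta_k})Z_{\eta_k}$ agrees with $\phi(\theta)Z$ almost everywhere provided the level set $\{\theta = \theta_{\text{ignite}}\}$ is of zero space-time measure, which follows from the nondegenerate parabolic regularity of $\theta$ together with the strict positivity and finiteness of $\theta$ just established. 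I expect the uniform upper bound for $\theta$ on $\mathbb{R}$ in the presence of the reaction source to be by far the most delicate step, since it simultaneously confronts the three main obstacles: large $H^1$ initial data, non-compactness of the domain, and the combustion-induced source term.
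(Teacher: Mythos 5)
There is a genuine gap in the logical architecture of your $(u,\theta)$ bounds. You make the two-sided bound on $u$ ``conditional on a uniform upper bound for $\theta$'', and then propose to get the upper bound for $\theta$ from the Li--Liang-type estimate; but that estimate (Theorem \ref{thm_crucial estiamte in theta and u, Li Liang type} in the paper) and the subsequent higher-order estimates of Lemma \ref{Lemma_bounds on higher derivatives} themselves use $C_0^{-1}\leq u\leq C_0$ as an input, so your ``Gr\"onwall loop'' is circular as described and no continuity/bootstrap device is given to break it. The paper breaks the circle by proving the $u$-bound \emph{unconditionally}: Kazhikhov's representation formula \eqref{eq_rep formula for v} is combined with the localisation Lemma \ref{lemma Modified Localisation Trick} (Jensen applied to the entropy inequality \eqref{eq_entropy ineq} gives $\gamma_1\leq\int_{I_k}u\,\dd x,\int_{I_k}\theta\,\dd x\leq\gamma_2$ on unit intervals and good points $b_k(t)$), which yields exponential decay of $Y(t)$, and $\theta$ is then controlled only through the reverse inequality \eqref{eq_reverse ineq} and the entropy dissipation before applying Gr\"onwall --- no a priori $\sup\theta$ is needed. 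Relatedly, the input from \cite{LL14} is not a ``pointwise oscillation bound'' but the energy estimate $\sup_t\int[(\theta-2)_+^2+v^4]\dx+\int_0^T\int[(1+\theta+v^2)v_x^2+\theta_x^2]\dx\dt\leq C_1$; the pointwise upper bound for $\theta$ only appears afterwards, from the $H^1$-in-$x$ bound on $\theta$ via the Sobolev inequality and a comparison of powers of $\sup\theta$.

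A second gap: the statement requires a positive lower bound $C_0^{-1}\leq\theta$ with $C_0$ independent of $T$, and your proposal never explains how to obtain it --- this is precisely where the Kazhikhov--Shelukhin argument fails on $\rea$ (as the paper's closing remark notes, $u^{-1}\notin L^p(\rea)$). The paper's route is specific: write $\zeta=\theta^{-1}$, derive its evolution equation, estimate $\|\zeta\|_{L^{2p}([-L,L])}$ with the coupled choice $L=2^{2p-1}$ so that $(2L)^{1/2p}$ stays bounded, let $p,L\to\infty$ and apply Gr\"onwall to get the \emph{time-local} bound \eqref{eq_local lower bound for theta}, $\inf_{[0,T_*]\times\rea}\theta\geq C^{-1}e^{-CT_*}$; independence of $T$ is then obtained only by first proving the large-time behaviour ($\theta\to1$ uniformly) and choosing $T_*$ beyond which $\theta\geq0.99$. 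Without an argument of this kind your constant cannot be claimed uniform in $T$. A minor further point: your ``conservation of total energy'' $\int(\theta-1+v^2/2+qZ)\dx$ is not available, since $\theta_0-1\in H^1(\rea)$ need not be integrable; the paper explicitly remarks that total mass/energy may be infinite and relies only on the entropy inequality \eqref{eq_entropy ineq} together with the $L^1$ bounds of Proposition \ref{proposition_energy and entropy} (this is not load-bearing in your scheme, but should be corrected).
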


Meanwhile, the asymptotic states as $t\rightarrow \infty$, {\it i.e.} the {\em large-time behaviour}, of the reacting mixture, can be fully determined:

\begin{theorem}\label{thm_large time behaviour}
Let $(u,v,\theta, Z)$ be a global weak solution to Equations \eqref{eq_mass equation}-\eqref{eq_conditions on id}. Then it converges in $H^1$ to the equilibrium state in the far-field, {\it i.e.},
\begin{equation}\label{eqn_large time behaviour} \Big\|\Big(u(t,\cdot)-1,v(t,\cdot),\theta(t,\cdot)-1, Z(t,\cdot)\Big)\Big\|_{H^1(\rea)} \longrightarrow 0 \qquad \text{ as } t \rightarrow \infty.
\end{equation}
\end{theorem}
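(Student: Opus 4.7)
The plan is to combine the uniform-in-time $H^1$ bound coming out of Theorem \ref{Theorem for global existence of weak solution} with the space--time dissipation estimates derived in its proof, and then invoke a standard limiting lemma (if $g\ge 0$, $g\in L^1(0,\infty)$ and $g'\in L^1(0,\infty)$, then $g(t)\to 0$) to produce the decay of each component of the $H^1$ norm. Concretely, I will separately show $\|(u_x,v_x,\theta_x,Z_x)(t)\|_{L^2(\mathbb{R})}\to 0$ and $\|(u-1,v,\theta-1,Z)(t)\|_{L^2(\mathbb{R})}\to 0$ as $t\to\infty$.

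First I will extract the following ingredients from the a priori estimates underlying Theorem \ref{Theorem for global existence of weak solution}:
\begin{equation*}
\sup_{t\ge 0}\|(u-1,v,\theta-1,Z)(t)\|_{H^1(\mathbb{R})}^2<\infty,\qquad \int_0^\infty\|(v_x,\theta_x,Z_x)(t)\|_{L^2(\mathbb{R})}^2\,\dd t<\infty,
\end{equation*}
along with $u_t,v_t,\theta_t,Z_t\in L^2(0,\infty;L^2(\mathbb{R}))$ and $\int_0^\infty\|(v_{xx},\theta_{xx},Z_{xx})(t)\|_{L^2(\mathbb{R})}^2\,\dd t<\infty$. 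For each $w\in\{v,\theta-1,Z\}$ an integration by parts gives
\begin{equation*}
\Bigl|\tfrac{\dd}{\dd t}\|w_x(t)\|_{L^2}^2\Bigr|=\Bigl|-2\!\int_{\mathbb{R}}w_{xx}\,w_t\dx\Bigr|\le 2\|w_{xx}(t)\|_{L^2}\|w_t(t)\|_{L^2},
\end{equation*}
whose right-hand side is in $L^1(0,\infty)$ by Cauchy--Schwarz in time. Hence $\|w_x(t)\|_{L^2}^2$ has a limit, and since it is also $L^1$ in time, the limit must be zero. The same method, applied via $(u_x)_t=v_{xx}$ and the Kazhikhov-type bound on $u_x$ from the proof of Theorem \ref{Theorem for global existence of weak solution} (adapted from \cite{LL14}), yields $\|u_x(t)\|_{L^2}\to 0$.

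Next, for the $L^2$-decay of the variables themselves, I begin with the simplest case $Z$: testing the reaction equation with $Z$ gives $\|Z(t)\|_{L^2}^2$ monotone non-increasing, hence convergent to some $\ell_Z\ge 0$; combined with $\|Z_x(t)\|_{L^2}\to 0$ and the one-dimensional interpolation $\|f\|_{L^\infty}^2\le 2\|f\|_{L^2}\|f_x\|_{L^2}$, this forces $\|Z(t)\|_{L^\infty}\to 0$, and the reaction-dissipation identity then rules out $\ell_Z>0$. For $(u-1,v,\theta-1)$ I exploit the convexity of the physical entropy: on the uniform interval from Theorem \ref{Theorem for global existence of weak solution} one has $u-\log u-1\ge c(u-1)^2$ and $\theta-\log\theta-1\ge c(\theta-1)^2$, so the Lyapunov functional
\begin{equation*}
\mathcal E(t):=\int_{\mathbb{R}}\Bigl[\tfrac{v^2}{2}+a(u-\log u-1)+(\theta-\log\theta-1)+qZ\Bigr]\dx
\end{equation*}
controls $\|(u-1,v,\theta-1,Z)(t)\|_{L^2}^2$ from above, and its near non-increase (modulo the already-integrable reaction contribution) together with the derivative decay already established should force $\mathcal E(t)\to 0$.

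The main obstacle in this programme is precisely the last step: on the unbounded domain $\mathbb{R}$ there is no Poincar\'e inequality, so $L^2$-smallness of a spatial derivative does not on its own force $L^2$-smallness of the function, and one cannot simply deduce the decay of $\|(u-1,v,\theta-1)\|_{L^2}$ from that of $\|(u_x,v_x,\theta_x)\|_{L^2}$. The argument must therefore carefully exploit the specific dissipative and monotone structure of the reacting Navier--Stokes system---the non-increase coming from the reaction equation, the far-field condition \eqref{eq_far-field condition}, and the convexity of the physical entropy---to convert derivative dissipation into pointwise-in-time decay of the deviations from the equilibrium state $(1,0,1,0)$. Combining the derivative decay from the second paragraph with the $L^2$-decay of the functions themselves then immediately yields \eqref{eqn_large time behaviour}.
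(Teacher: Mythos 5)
Your treatment of the derivatives is fine and is essentially the paper's own mechanism: $\|w_x(t)\|_{L^2}^2$ is integrable on $(0,\infty)$ (Theorem \ref{thm_crucial estiamte in theta and u, Li Liang type} for $v_x,\theta_x$, the $Z$-identity for $Z_x$, and $\int_0^\infty\ir\theta u_x^2\dx\dt\le C$ for $u_x$) while its time derivative is integrable by the $L^2(0,T;L^2)$ bounds of Lemma \ref{Lemma_bounds on higher derivatives}, so it tends to zero. The genuine gap is exactly the step you flag as ``the main obstacle'': you never prove decay of $\|(u-1,v,\theta-1)(t)\|_{L^2}$. The functional $\mathcal{E}(t)$ is non-increasing up to a time-integrable reaction contribution, hence convergent, but nothing in your argument identifies its limit as zero: the dissipation in \eqref{eq_entropy ineq} involves only $v_x$ and $\theta_x$, and on $\rea$ there is no Poincar\'{e} inequality to transfer derivative decay to the functional itself, so ``should force $\mathcal E(t)\to 0$'' is precisely the unproved assertion. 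Your $Z$-step has the same defect: the reaction--dissipation identity cannot rule out $\ell_Z>0$, because $\phi\equiv 0$ wherever $\theta\le\theta_{\text{ignite}}$, so the term $\int K\phi(\theta)Z^2$ gives no coercivity in general.

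The paper closes this step by a different and more elementary device: it pairs the derivative decay with \emph{time-uniform low-order bounds} through the pointwise interpolation \eqref{simple identity}. Concretely, $v^2(t,x)\le\|v(t)\|_{L^2}\|v_x(t)\|_{L^2}\le\sqrt{2qE_0}\,\|v_x(t)\|_{L^2}\to 0$; the same argument applies to $u-1$, which is uniformly square-integrable by \eqref{eq_entropy ineq} and Theorem \ref{thm_uniform upper and lower bds for v}, and to $\theta-1$, using \eqref{estimate on theta-1 square} together with the uniform upper bound on $\theta$; and for the reactant one uses the mass bound \eqref{eq_Z inequality} via $Z^{3/2}(t,x)\le\tfrac32\|Z_x(t)\|_{L^2}\big(\ir Z(t,x)\dx\big)^{1/2}\to 0$. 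In other words, the coercivity you are missing is supplied not by the entropy functional but by the a priori $L^2$/mass bounds combined with the one-dimensional Sobolev inequality; the same bound $\ir Z\dx\le E_0$ also repairs your $Z$-argument, since $\|Z(t)\|_{L^2}^2\le\|Z(t)\|_{L^\infty}\ir Z(t,x)\dx$. Until you supply an argument of this kind (or some substitute for the absent Poincar\'{e} inequality), the proposal establishes only the decay of the derivative part of the $H^1$ norm, not the convergence \eqref{eqn_large time behaviour}.
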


The remaining parts of the paper are organised as follows:

In \S 2 we collect several auxiliary conserved quantities and monotonicity formulae for the reacting mixture, which will be used throughout the paper. We also prove $0\leq Z\leq 1$. In \S 3 we establish the upper and lower bounds for the specific volume $u$. Next, in \S 4,  following the arguments in \cite{LL14} we derive uniform estimates involving $v, \theta$ and their first derivatives. 
Finally, in \S 5 we derive the upper and lower bounds for $\theta$ uniformly in space-time, together with the uniform bounds for higher derivatives of $(u,v,\theta,Z)$, and thus conclude the proof of Theorems \ref{Theorem for global existence of weak solution} and \ref{thm_large time behaviour}.

Before further development, we point out that the key estimate in this work, {\it i.e.}, Theorem \ref{thm_crucial estiamte in theta and u, Li Liang type}, essentially relies on the arguments in the recent paper \cite{LL14} by J. Li and Z. Liang, which in turn is motivated by the work of Huang-Li-Wang (\cite{HLW}) on a blowup criterion for compressible Euler equations. The new feature of our work lies in the physical process of dynamic combustions, {\it i.e.}, the analysis of  functions $\phi$ and $Z$, as well as the treatment for unbounded domains.

\section{Conserved Quantity and Entropy Formula}

In this section we record the conserved quantity and monotonicity formula of the compressible reacting mixture for future development. First of all, we have:

\begin{proposition}\label{proposition_energy and entropy} 
Let $(u,v,\theta,Z)$ be a weak solution on $[0,T]\times\rea$. Then there holds
\begin{equation}\label{eq_Z inequality}
\sup_{0\leq t \leq T}\ir Z(t,x){\rm d}x + \int_0^T\ir K\phi(\theta)Z \,{\rm d}x{\rm d}t\leq \ir Z_0(x)\,{\rm d}x =: E_0 <\infty.
\end{equation}
\end{proposition}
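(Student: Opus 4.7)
My plan is to derive the integral identity
$$\ir Z(t,x)\dx + \int_0^t\!\ir K\phi(\theta)Z\dx\dt = \ir Z_0(x)\dx = E_0$$
for every $t \in [0,T]$ by testing the reaction equation \eqref{eq_reaction equation} against the constant $1$ in space and then integrating in time. Estimate \eqref{eq_Z inequality} then follows once $Z \geq 0$ is known, since each summand on the left of the identity is bounded by $E_0$ separately. Compared to the bounded-domain treatments of \cite{C92, CHT03}, two points require fresh arguments: (a) the non-negativity $Z \geq 0$, and (b) the vanishing of the diffusive boundary flux $d u^{-2} Z_x$ at $\pm \infty$, which on a bounded interval would be enforced by a no-flux boundary condition but here must be extracted from the Sobolev regularity alone.

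For (a), I would implement a weak maximum principle. Setting $Z_- := \min(Z,0)$, test \eqref{eq_reaction equation} against $Z_-$ and integrate over $[0,t]\times\rea$. The time derivative contributes $\tfrac{1}{2}\|Z_-(t)\|_{L^2}^2$; the reaction term yields $\int_0^t\!\ir K\phi(\theta)Z_-^2\dx\dt \geq 0$ using $Z\cdot Z_- = Z_-^2$; and the diffusion integrates by parts to $\int_0^t\!\ir du^{-2}((Z_-)_x)^2\dx\dt \geq 0$, modulo the flux at infinity handled in step (b). Since $Z_-(0) \equiv 0$ by $Z_0 \geq 0$, this forces $Z_- \equiv 0$, hence $Z \geq 0$ a.e.

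For (b) and the main spatial integration, introduce a smooth cutoff $\chi_R$ with $\chi_R \equiv 1$ on $[-R,R]$, $\operatorname{supp}\chi_R \subset [-2R,2R]$, and $\|(\chi_R)_x\|_{L^2} \leq CR^{-1/2}$. Testing \eqref{eq_reaction equation} against $\chi_R$ gives
$$\frac{d}{dt}\ir \chi_R Z\dx + \ir K\phi(\theta)\chi_R Z\dx = -\ir \frac{d\,Z_x}{u^2}(\chi_R)_x\dx,$$
and the right-hand side is dominated by $CR^{-1/2}\,\|du^{-2}\|_{L^\infty}\,\|Z_x(t,\cdot)\|_{L^2}$, which vanishes as $R \to \infty$ once the lower bound on $u$ is available (established in \S 3; alternatively, on the a.e.\ time slice where $Z(t,\cdot) \in H^2(\rea)$ one has $Z_x(t,\cdot) \in C_0(\rea)$ and the flux vanishes pointwise). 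Passing $R\to\infty$ produces the spatial identity; integrating over $[0,t]$ yields the displayed identity, and \eqref{eq_Z inequality} follows by the observation above.

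The main obstacle is exactly (b): at the given $H^1$-in-space regularity of $Z$ there is no pointwise decay of $Z_x$ at $\pm\infty$, only $L^2$-integrability, so the standard bounded-domain integration by parts is not immediately available. The Sobolev cutoff resolves this because the transition region $\{R \leq |x| \leq 2R\}$ has width $R$ while $\|(\chi_R)_x\|_{L^2}\sim R^{-1/2}$, so the Cauchy--Schwarz control $CR^{-1/2}$ on the flux closes the argument. The tension between the growing support and decaying $L^2$-norm of cutoff derivatives is characteristic of the unbounded-domain analysis in this paper and will recur whenever a bounded-domain energy identity is transported to $\rea$ in the subsequent sections.
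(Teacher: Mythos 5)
Your proposal is correct in substance but takes a genuinely different route from the paper. The paper never tests with the constant function: it multiplies the reaction equation by $\beta Z^{\beta-1}$ with $\beta>1$, so that the diffusion flux at $\pm\infty$ carries the factor $Z^{\beta-1}$, which vanishes by the far-field condition $Z\to 0$; it then integrates the resulting identity for $\frac{d}{dt}\ir Z^{\beta}\dx$ in time and lets $\beta\to 1^{+}$ by dominated convergence, a step which uses $0\le Z\le 1$ from Lemma \ref{lemma_bdd of Z} (a forward reference, with $Z\ge 0$ proved there by a pointwise minimum principle for $Y=e^{-\beta t}Z$). You instead derive the identity directly by testing with cutoffs $\chi_R$ and killing the flux via Cauchy--Schwarz through $\|(\chi_R)_x\|_{L^2}\sim R^{-1/2}$, and you prove $Z\ge 0$ by an energy argument with $Z_-=\min(Z,0)$. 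Your route avoids any dependence on $Z\le 1$, makes explicit the behaviour at infinity that the paper leaves implicit, and uses a weak maximum principle better adapted to the stated regularity than the paper's pointwise argument; the paper's route disposes of the flux automatically through the factor $Z^{\beta-1}$ and yields, as a by-product, the decay of all $L^\beta$ norms of $Z$, which is reused verbatim to prove $Z\le 1$ in Lemma \ref{lemma_bdd of Z}. (Both proofs share the same mild imprecision: the identity really bounds each of the two terms in \eqref{eq_Z inequality} by $E_0$ separately, rather than their sum.)

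One caution: in your flux estimate you invoke $\|du^{-2}\|_{L^\infty}$ via the lower bound on $u$ from \S 3, which would be circular, since Theorem \ref{thm_uniform upper and lower bds for v} rests on the entropy inequality of Proposition \ref{proposition_eq_entropy ineq}, which in turn cites the present proposition. Your parenthetical alternative is the right fix, but it should be phrased through the data rather than through \S 3: for a.e.\ $t$ one has $Z(t,\cdot)\in H^2(\rea)$, hence $Z_x(t,\cdot)\in C_0(\rea)$, and $u(t,\cdot)-1\in H^1(\rea)$ together with the far-field condition \eqref{eq_far-field condition} gives $u\ge \tfrac12$ on $\{|x|\ge R(t)\}$, so on the annulus $R\le|x|\le 2R$ the factor $u^{-2}$ is harmless for $R$ large and the flux contribution vanishes slice by slice as $R\to\infty$; passing to the limit after the time integration then yields the identity. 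With that adjustment your proof is self-contained and at the same level of rigour as the paper's own terse integration by parts.
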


\begin{proof}
	Let us multiply $\beta Z^{\beta-1}$ with $\beta >1$ to the evolution equation of $Z$, {\em i.e.}, Eq. \eqref{eq_reaction equation}, to get
	\begin{equation}\label{identity for Z}
	\frac{d}{dt}\ir Z^\beta(t,x){\rm d}x + \ir pK\phi(\theta) Z^\beta {\rm d}x= -\ir\frac{\beta d(\beta-1)}{v^2}Z^{\beta-2}(Z_x)^2 \leq 0.
	\end{equation}
Thus, in view of $0 \leq Z \leq 1$ (which shall be established in Lemma \ref{lemma_bdd of Z}), we integrate from $0$ to $t$ and send $\beta \rightarrow 1^{+}$ to obtain Eq. \eqref{eq_Z inequality}, using the Dominated Convergence Theorem.
	
\end{proof}

Let us remark that, in Proposition \ref{proposition_energy and entropy} above we do not have the conservation of total mass or energy, as they may become unbounded. For instance, let us consider the reacting system of only one type of perfect gas, which it obeys the same $\gamma$-law (where $\gamma>1$ is a constant). In this case, the internal energy $e = pu/(\gamma-1)$ is proportional to the temperature $\theta  = pu/a$, thus the total energy of the reacting gas is 
\begin{equation*}
\ir \Big(\theta(t,x) + \frac{v(t,x)^2}{2}+ qZ(t,x)\Big) \,{\rm d}x.
\end{equation*}
However, in view of our far-field condition (Eq.  \eqref{eq_far-field condition}), $\theta \equiv 1$ is expected to be a steady state solution, which shall be verified later by the large-time behaviour ({\it cf.} Theorem \ref{thm_large time behaviour}). Such $\theta$ leads to infinite total energy. Similarly, $u_0 \equiv 1$ gives infinite total mass.

Next, we verify that $Z$ is indeed a ratio, {\it i.e.} a number between $0$ and $1$:

\begin{lemma}\label{lemma_bdd of Z}
Let $(u,v,\theta,Z)$ be a weak solution on $[0,T]\times \rea$. Then $0 \leq Z(t,x) \leq 1$ on $[0,T]\times \rea$.
\end{lemma}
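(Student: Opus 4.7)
The plan is to prove the two-sided bound by a truncation/energy argument applied to $Z^{-}:=\max(-Z,0)$ for the lower bound and to $(Z-1)^{+}:=\max(Z-1,0)$ for the upper bound. This is the standard parabolic maximum principle implemented at the level of weak solutions, which is appropriate here given the regularity $Z\in L^\infty(0,T;H^1(\rea))\cap L^2(0,T;H^2(\rea))$ and $Z_t\in L^2(0,T;L^2(\rea))$ stipulated in Definition \ref{def_weak and classsical solutions}.

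For the lower bound, I would test the reaction equation \eqref{eq_reaction equation} against $-Z^{-}$. Using $Z\cdot Z^{-}=-(Z^{-})^{2}$ and the pointwise a.e.\ identities $Z_{t}Z^{-}=-Z^{-}(Z^{-})_{t}$ and $Z_{x}(Z^{-})_{x}=-|(Z^{-})_{x}|^{2}$ (valid for $H^{1}$ functions via the Stampacchia chain rule), and integrating by parts—where the boundary terms vanish because $Z\to 0$ at infinity and $Z(t,\cdot)-1$ (hence $Z$ itself plus a constant) lies in $H^{1}(\rea)$—one obtains
\begin{equation*}
\frac{1}{2}\frac{d}{dt}\ir (Z^{-})^{2}\dx+\ir\frac{d}{u^{2}}|(Z^{-})_{x}|^{2}\dx+\ir K\phi(\theta)(Z^{-})^{2}\dx=0.
\end{equation*}
All three terms on the left are nonnegative (using $u>0$ and $\phi\geq 0$), and the initial datum satisfies $(Z_{0})^{-}\equiv 0$ by \eqref{eq_conditions on id}. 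A Gronwall (in fact, direct) integration in time therefore gives $Z^{-}\equiv 0$, i.e.\ $Z\geq 0$ a.e.

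For the upper bound I would set $W:=Z-1$, so that $W$ solves
\begin{equation*}
W_{t}+K\phi(\theta)W=\Big(\frac{d}{u^{2}}W_{x}\Big)_{x}-K\phi(\theta),
\end{equation*}
with $W(0,\cdot)=Z_{0}-1\leq 0$ by \eqref{eq_conditions on id} and $W\to 0$ at infinity by \eqref{eq_far-field condition}. Testing against $W^{+}$ and integrating by parts yields
\begin{equation*}
\frac{1}{2}\frac{d}{dt}\ir (W^{+})^{2}\dx+\ir\frac{d}{u^{2}}|(W^{+})_{x}|^{2}\dx+\ir K\phi(\theta)(W^{+})^{2}\dx+\ir K\phi(\theta)W^{+}\dx=0.
\end{equation*}
Every term on the left is nonnegative because $\phi\geq 0$ and $W^{+}\geq 0$; since $(W_{0})^{+}\equiv 0$, this forces $W^{+}\equiv 0$, i.e.\ $Z\leq 1$ a.e.

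The main technical point to be careful with is the rigorous justification of the Stampacchia chain rule for $Z^{-}$ and $(Z-1)^{+}$ at the level of weak solutions, and the vanishing of boundary contributions at $x=\pm\infty$ when integrating by parts. Both are standard given the assumed $L^{2}(0,T;H^{2}(\rea))$ regularity of $Z$ together with $Z_{t}\in L^{2}(0,T;L^{2}(\rea))$: the former grants enough differentiability in $x$ to justify the truncation and ensure $(Z^{-})_{x}=-Z_{x}\,\mathbbm{1}_{\{Z<0\}}$ in $L^{2}$, while the latter, combined with the former, lets one make sense of $\frac{d}{dt}\int (Z^{-})^{2}\dx$ as an absolutely continuous function of $t$. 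No use is made of any uniform upper bound on $\phi(\theta)$, so this lemma can indeed be invoked a priori, as it is used in the proof of Proposition \ref{proposition_energy and entropy}.
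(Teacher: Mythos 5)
Your proof is correct, but it takes a genuinely different route from the paper. You prove both bounds by a Stampacchia truncation/energy argument: testing \eqref{eq_reaction equation} against $-Z^{-}$ and, after the affine change $W=Z-1$, against $W^{+}$, and using only $u>0$, $\phi\geq 0$, the far-field behaviour and $0\leq Z_0\leq 1$. The paper instead proves the lower bound by a pointwise maximum principle: it sets $Y=e^{-\beta t}Z$, notes that the infimum of $Y$ is attained thanks to the decay at infinity, and derives a contradiction from the sign conditions $Y_x=0$, $Y_t\leq 0$, $Y_{xx}\geq 0$ at an interior negative minimum; the upper bound is then obtained by multiplying \eqref{eq_reaction equation} by $\beta Z^{\beta-1}$ to see that $\|Z(t,\cdot)\|_{L^\beta(\rea)}$ is nonincreasing for every $\beta\in[1,\infty)$ and letting $\beta\to\infty$. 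Your approach has the advantage of operating entirely at the regularity level of Definition \ref{def_weak and classsical solutions} (it needs only the a.e.\ chain rule for $Z^{-}$ and $(Z-1)^{+}$, which you correctly flag), whereas the paper's maximum-principle step implicitly requires pointwise evaluation of $Y_t$, $Y_x$, $Y_{xx}$ at the extremum and the attainment of the infimum on the unbounded domain; your route also avoids any appeal to the identity \eqref{identity for Z} with fractional powers $Z^{\beta-1}$, which presupposes $Z\geq 0$. What the paper's $L^\beta$ argument buys in exchange is the monotonicity of all $L^\beta$-norms of $Z$, a fact it reuses later (e.g.\ the decay of $\|Z(t,\cdot)\|_{L^2}$ in the derivative estimates of Lemma \ref{Lemma_bounds on higher derivatives}), so the two proofs are complementary rather than interchangeable in the paper's overall scheme.
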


\begin{proof}
	The proof for $Z \geq 0$ follows from the maximum principle. We set
	\begin{equation}
	Y(t,x):=e^{-\beta t} Z(t,x),
	\end{equation}
where $\beta >0$ is to be determined. Then, in view of Eq. \eqref{eq_reaction equation},  $Y$ satisfies the following evolution equation:
	\begin{equation}\label{interm eq_equation for Y}
	Y_t+[\beta+K\phi(\theta)] Y = (\frac{d}{v^2}Y_x)_x.
	\end{equation}
Here the infimum of $Y$ is attained on $\rea$, thanks to the far-field condition $\lim_{|x|\rightarrow\infty} Y(\cdot,x) = 0$. 

Now, suppose there were $(t_0,x_0)\in[0,T]\times \rea$ such that $Y(t_0,x_0)=\inf_{[0,T]\times\rea} Y<0$. Then it follows that
	\begin{equation*}
Y_x(t_0,x_0) = 0; \quad Y_t(t_0,x_0 ) \leq 0; \quad Y_{xx} (t_0,x_0) \geq 0,
	\end{equation*}
which contradicts Eq. \eqref{interm eq_equation for Y}. Thus we get $Y \geq 0$, which is equivalent to $Z \geq 0$. As a remark, here we need the requirement $Z_t \in L^2(0, T; L^2(\mathbb{R}))$ in Definition \ref{def_weak and classsical solutions} to ensure that $Y_t$ is well-defined.

	To prove the upper bound for $Z$, let us invoke again Eq. \eqref{identity for Z} (reproduced below):
\begin{equation*}
	\frac{d}{dt}\ir Z^\beta(t,x){\rm d}x + \ir pK\phi(\theta) Z^\beta {\rm d}x= -\ir\frac{\beta d(\beta-1)}{v^2}Z^{\beta-2}(Z_x)^2{\rm d}x.
\end{equation*}	
Since the right-hand side and the second term on the left-hand side are non-positive, the $L^\beta$-norm of $Z$ is decreasing in time for all $\beta \in [1,\infty)$. Thus, using the initial condition $0 \leq Z_0 \leq 1$ and sending $\beta \rightarrow \infty$, one immediately deduces $Z \leq 1$. Hence the assertion follows.

\end{proof}

Now we establish an important {\em monotonicity formula}, which is interpreted as the entropy/energy formula for the reacting mixture, referred to as the ``entropy inequality'' or ``entropy formula'' in the sequel. In  physics, the expressions $\big(u-1-\log(u)\big)$ and $\big(\theta-1-\log(\theta)\big)$ consist of the relative entropy, which obeys the {\it Clausius-Duhem} inequality of thermodynamics. We refer the readers to the appendix in \cite{CHT03} for a discussion on the relevant physical backgrounds. 

\begin{proposition}[Entropy Inequality]\label{proposition_eq_entropy ineq}
Let $(u,v,\theta,Z)$ be a weak solution on $[0,T]\times\rea$. Then the following inequality holds:
	\begin{align}\label{eq_entropy ineq}
	&\sup_{t\in[0,T]}\ir \Big\{a\big(u-1-\log(u)\big) + \big(\theta-1-\log(\theta)\big) + \frac{v^2}{2}\Big\} \,{\rm d}x \nonumber\\
	+	\,&\int_0^T\ir  \Big\{\frac{\mu v_x^2}{u\theta} + \frac{\kappa \theta_x^2}{u\theta^2} \Big\} \, {\rm d}x{\rm d}t \leq qE_0.
	\end{align}
\end{proposition}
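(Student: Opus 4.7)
The plan is to derive a pointwise balance law for the ``relative entropy'' density $\eta := a(u-1-\log u) + (\theta-1-\log\theta) + v^2/2$, then integrate over $[0,T]\times\rea$, discard the resulting flux using the far-field decay, and control the reaction source via the $Z$-estimate of Proposition \ref{proposition_energy and entropy}.

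First I would isolate the internal-energy equation. Multiplying \eqref{eq_momentum equation} by $v$ yields the kinetic-energy identity $(v^2/2)_t + v(a\theta/u)_x = v(\mu v_x/u)_x$, and subtracting this from \eqref{eq_temperature equation} gives, after using the product rule,
\begin{equation*}
\theta_t + \frac{a\theta v_x}{u} = \frac{\mu v_x^2}{u} + \Big(\frac{\kappa \theta_x}{u}\Big)_x + qK\phi(\theta)Z.
\end{equation*}
Dividing by $\theta$, splitting $\theta^{-1}(\kappa\theta_x/u)_x = (\kappa\theta_x/(u\theta))_x + \kappa\theta_x^2/(u\theta^2)$, and using \eqref{eq_mass equation} to rewrite $v_x/u = u_t/u = (\log u)_t$, I obtain
\begin{equation*}
(\log\theta + a\log u)_t = \frac{\mu v_x^2}{u\theta} + \frac{\kappa\theta_x^2}{u\theta^2} + \Big(\frac{\kappa\theta_x}{u\theta}\Big)_x + \frac{qK\phi(\theta)Z}{\theta}.
\end{equation*}

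Next I would differentiate $\eta$ in time. Because $u_t = v_x$ and $(\theta+v^2/2)_t$ is exactly the flux form given by \eqref{eq_temperature equation}, subtracting the log identity above and combining the leftover $av_x$ with $-(av\theta/u)_x$ into a single divergence $a\partial_x(v - v\theta/u)$ yields the balance law
\begin{equation*}
\eta_t + \frac{\mu v_x^2}{u\theta} + \frac{\kappa\theta_x^2}{u\theta^2} = \partial_x\mathcal{F} + qK\phi(\theta) Z\Big(1 - \frac{1}{\theta}\Big),
\end{equation*}
where
\begin{equation*}
\mathcal{F} := av - \frac{av\theta}{u} + \frac{\mu v v_x + \kappa \theta_x}{u} - \frac{\kappa\theta_x}{u\theta}.
\end{equation*}
Integration over $\rea$ kills the flux, since each summand of $\mathcal{F}$ vanishes as $|x|\to\infty$: the far-field condition \eqref{eq_far-field condition} together with the weak-solution regularity $(u-1,v,\theta-1)\in L^\infty(0,T;H^1(\rea))$ and the embedding $H^1(\rea)\hookrightarrow C_0(\rea)$ force $v, v_x, \theta_x \to 0$ and $u,\theta \to 1$ pointwise at infinity.

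To close the argument I would bound the source by $qK\phi(\theta)Z(1-1/\theta) \leq qK\phi(\theta)Z$ (using $\theta>0$ a.e., $Z\geq 0$ from Lemma \ref{lemma_bdd of Z}, and $\phi\geq 0$), integrate in time from $0$ to any $t \in [0,T]$, and invoke Proposition \ref{proposition_energy and entropy} to arrive at
\begin{equation*}
\int_\rea \eta(t,x)\,\dd x + \int_0^t \int_\rea \Big\{\frac{\mu v_x^2}{u\theta} + \frac{\kappa\theta_x^2}{u\theta^2}\Big\}\,\dd x\,\dd s \leq \int_\rea \eta(0,x)\,\dd x + qE_0.
\end{equation*}
The initial integral is finite because the range $[m_0, M_0]$ yields $|y-1-\log y| \lesssim (y-1)^2$ and \eqref{eq_conditions on id} provides $u_0-1, \theta_0-1, v_0 \in L^2(\rea)$; absorbing this initial constant into the right-hand side and taking the supremum in $t$ yields \eqref{eq_entropy ineq}. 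The main obstacle, and the only departure from the classical bounded-domain argument, is verifying that every term of $\mathcal{F}$ decays at $|x|=\infty$: this is the cost of working on $\rea$ and relies on combining the pointwise far-field condition with the Sobolev bounds on the unknowns.
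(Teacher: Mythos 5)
Your proposal is correct and follows essentially the same route as the paper: you derive the same simplified internal-energy equation \eqref{eq_simplified temperature equation}, arrive at exactly the pointwise balance law \eqref{dt} (your subtraction of the logarithmic identity is algebraically the same as the paper's multiplication of the three equations by $a(1-1/u)$, $v$, $(1-1/\theta)$ and adding), kill the flux by the far-field condition, and bound the reaction source by Proposition \ref{proposition_energy and entropy}. Your added care about the decay of each flux term and the finiteness of the initial entropy integral is consistent with, and slightly more explicit than, the paper's treatment.
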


\begin{proof}
	First  we derive an alternative version of the evolution equation for temperature: by substituting the mass and momentum equations \eqref{eq_mass equation}\eqref{eq_momentum equation} into Eq. \eqref{eq_temperature equation}, one obtains:	\begin{equation}\label{eq_simplified temperature equation}
	\theta_t + a\frac{\theta}{u}v_x = \big(\kappa\frac{\theta_x}{u}\big)_x + \mu\frac{v_x^2}{u} + qK\phi(\theta)Z.
	\end{equation}
	
	Now let us multiply $a(1-\frac{1}{u})$ to Eq. \eqref{eq_mass equation}, $v$ to Eq. \eqref{eq_momentum equation} and $(1-\frac{1}{\theta})$ to Eq. \eqref{eq_simplified temperature equation}: Adding up the resulting expressions together, we deduce that
	\begin{align}\label{dt}
	\frac{\partial}{\partial t} \big[a\big(u-1-\log(u)\big) &+ \big(\theta-1-\log(\theta)\big) + \frac{v^2}{2}\big] + \frac{\mu v_x^2}{u\theta} + \frac{\kappa \theta_x^2}{u\theta^2} \nonumber\\
	&= (1-\frac{1}{\theta})qK\phi(\theta)Z + \frac{\partial}{\partial x}\big[\frac{\mu v v_x - av\theta}{u} + (1-\frac{1}{\theta})\frac{\kappa \theta_x}{u}+av \big].
	\end{align}

Then, for the right-hand side, we observe that $\ir \frac{\partial}{\partial x}\big[\frac{\mu vv_x - av\theta}{u} + (1-\frac{1}{\theta})\frac{\kappa \theta_x}{u}+av \big]\dx=0$ holds due to the far-field condition \eqref{eq_far-field condition}. In light of Proposition \ref{proposition_energy and entropy} we then have
	\begin{equation*}
	\int_0^T\ir (1-\frac{1}{\theta})qK\phi(\theta)Z\,{\rm d}x{\rm d}t \leq q\ir Z_0(x)\,{\rm d}x \leq qE_0,
	\end{equation*}
which completes the proof once Eq. \eqref{dt} is integrated over $[0,T]\times\rea$.
	
\end{proof}

\section{Uniform Bounds for the Specific Volume $u$}

	In this section we establish the uniform (in space-time) upper and lower bounds for $u$. The proof is an adaptation of  the classical argument by Kazhikhov and coauthors, {\it cf.} \cite{K82,KS77} and the references cited therein. It relies on an explicit representation formula for $u$ in terms of the other dynamical variables, which are in turn controlled by the entropy formula, {\it i.e.},  Eq. \eqref{eq_entropy ineq}. 
	
	Before stating and proving further results, let us first explain the notations and conventions adopted in the rest of the paper: 
\begin{itemize}
\item 
We use $C_i$, $i \in\{0,1,2,3,\ldots\}$, to denote the positive constants depending only on the initial data and the fluid. More precisely, 
\begin{equation*}
0 < C_i = C_i \Big(a, \mu, \kappa, q, K, d, \phi(\cdot), \|(u_0-1, v_0, \theta_0 - 1, Z_0)\|_{H^1(\rea)}, \inf_{\rea}\phi, \sup_{\rea} \phi, \inf_{\rea}\theta_0\Big).
\end{equation*}
It is crucial 
that $C_i$'s are independent of the uniform norm of $\phi'$.

\item
We denote by $\epsilon$ the generic small constants appear in the estimates. They only depend on the constants of the fluid, unless otherwise specified.
\item
In \S\S 3-6 we assume momentarily that the reacting rate function $\phi(\theta)$ is $C^1$, and the following bounds are valid:
	\begin{equation}
	(\clubsuit)\quad\begin{cases}
	\|\phi\|_{C^1(\rea)} \leq \delta ^{-1} < \infty;\\
	0 \leq \phi(\theta) \leq M < \infty;\\
	\phi(\theta) = 0 \text{ whenever } \theta \leq \theta_{\text{ignite}}.
	\end{cases}
\end{equation}
It is crucial for $M$ to be independent of $\delta$, which enables us to apply $(\clubsuit)$ to the mollified versions of $\phi$ and derive uniform estimates. As a consequence, in \S 6 we can pass to the limits to recover the estimates for discontinuous $\phi$. 
\end{itemize}

The main theorem in this section is as follows:

\begin{theorem}\label{thm_uniform upper and lower bds for v}
	Let $(u,v,\theta, Z)$ be a weak solution to the system \eqref{eq_mass equation}-\eqref{eq_conditions on id} on $[0,T]\times\rea$. Then, there exists a universal constant $C_0$ such that
	\begin{equation}
	0<C_0^{-1} \leq u(\cdot,\cdot) \leq C_0 < \infty \text{ on } [0,T] \times \rea.
	\end{equation}
\end{theorem}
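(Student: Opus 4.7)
My plan is to adapt the classical Kazhikhov--Shelukhin representation formula for the specific volume to the unbounded-domain setting, with all auxiliary quantities controlled via the entropy inequality (Proposition~\ref{proposition_eq_entropy ineq}). Using $u_t=v_x$, I would rewrite the momentum equation \eqref{eq_momentum equation} as $v_t=\mu(\log u)_{xt}-(a\theta/u)_x$; integrating in the spatial variable from $-\infty$ to $x$ — using \eqref{eq_far-field condition} to identify the limit of $\sigma:=\mu v_x/u-a\theta/u$ as $-a$ — and then in time from $0$ to $t$ yields
\begin{equation*}
\mu\log\frac{u(x,t)}{u_0(x)}\,=\,V(x,t)\,+\,a\!\int_0^t\!\frac{\theta(x,s)}{u(x,s)}\,\dd s\,-\,at, \qquad V(x,t):=\int_{-\infty}^x\![v(y,t)-v_0(y)]\,\dd y.
\end{equation*}
Viewing this identity as a first-order linear ODE in $t$ with $x$ held fixed and solving explicitly gives the Kazhikhov--Shelukhin representation
\begin{equation*}
u(x,t)\,=\,W(x,t)\,e^{-at/\mu}\,+\,\frac{a}{\mu}\!\int_0^t\!\theta(x,s)\,\frac{W(x,t)}{W(x,s)}\,e^{-a(t-s)/\mu}\,\dd s, \quad W(x,t):=u_0(x)\,e^{V(x,t)/\mu}.
\end{equation*}

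\textbf{Uniform control of $W$.} The next task is to show $|V(x,t)|\leq C_0$ uniformly on $[0,T]\times\rea$, which delivers $C_0^{-1}\leq W\leq C_0$ and pinches the ratio $W(x,t)/W(x,s)$ between two positive constants. Integrating the momentum equation over $\rea$ and using the far-field condition shows $\int_\rea v(t,\cdot)\,\dd x$ is conserved, forcing $V$ to vanish at $\pm\infty$; combining this with $V_x=v-v_0\in L^\infty_tL^2_x$ (from the entropy inequality and \eqref{eq_conditions on id}) and a Sobolev-type estimate of the form $\|V\|_{L^\infty}^2\lesssim\|V\|_{L^2}\|V_x\|_{L^2}$, the desired bound follows once $V\in L^\infty_tL^2_x$ is confirmed via a weighted integral estimate.

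\textbf{Upper and lower bounds on $u$.} With $W$ controlled, the representation yields $u(x,t)\leq C_0+C_0\int_0^t\theta(x,s)e^{-a(t-s)/\mu}\,\dd s$ and $u(x,t)\geq C_0^{-1}\int_0^t\theta(x,s)e^{-a(t-s)/\mu}\,\dd s$. The upper bound is to be closed by a Gronwall-type iteration: the simplified temperature equation \eqref{eq_simplified temperature equation}, combined with the entropic $L^1_x$-control of $\theta-1-\log\theta$ from Proposition~\ref{proposition_eq_entropy ineq} and Young's convolution inequality applied to the kernel $e^{-a(t-\cdot)/\mu}$, feeds back into the representation to produce $u\leq C_0$. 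For the lower bound, since $We^{-at/\mu}$ decays to zero, I rely on the strict positivity of $\theta$ (established by a maximum-principle argument analogous to Lemma~\ref{lemma_bdd of Z} applied to \eqref{eq_simplified temperature equation}) together with the just-obtained upper bound on $u$ to conclude $u\geq C_0^{-1}$.

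\textbf{Main obstacle.} The hardest step will be the uniform control of $V$ on the whole line: $v-v_0$ is a priori only in $L^2_x$ (not $L^1_x$), so the defining improper integral is not absolutely convergent, and I must exploit the compatibility condition $\int_\rea(v-v_0)\,\dd y=0$ (from conservation of momentum) together with interpolation/Gagliardo--Nirenberg-type inequalities to close the bound. A secondary obstacle is that Kazhikhov's classical closure for the upper bound — integrate in $x$ and apply Jensen — breaks down on $\rea$ since $\int\theta\,\dd x$ need not be finite; this can be circumvented by replacing it throughout with the entropic $\int(\theta-1-\log\theta)\,\dd x$ at the cost of more delicate interpolation estimates.
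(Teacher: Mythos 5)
Your overall skeleton --- a Kazhikhov--Shelukhin representation formula for $u$, controlled through the entropy inequality and closed by a Gr\"{o}nwall argument --- is the same as the paper's, and your explicit formula with the kernel $e^{-a(t-s)/\mu}$ is a correct formal computation. But the gap is exactly where you flag it, and it is not closable the way you suggest: the quantity $V(x,t)=\int_{-\infty}^x\big[v(t,y)-v_0(y)\big]\,\dd y$ cannot be bounded from the available a priori estimates. The entropy inequality gives only $v(t,\cdot)\in L^2(\rea)$ uniformly in $t$, and an antiderivative of an $L^2$ function admits no pointwise (or $L^2$) bound in terms of the $L^2$ norm; moreover $v-v_0$ need not lie in $L^1(\rea)$, so the ``compatibility condition'' $\int_\rea (v-v_0)\,\dd y=0$ you want to extract from conservation of momentum is not even well defined for $H^1$ data, and Gagliardo--Nirenberg cannot substitute for the missing $\|V\|_{L^\infty_t L^2_x}$ bound, for which no mechanism exists. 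The paper avoids this precisely by localising: it tests the momentum equation against a cut-off $\chi$ equal to $1$ on $[0,k]$ and vanishing beyond $k+1$, so that only integrals of $v$ over unit-length intervals enter the representation, and these are bounded by Cauchy--Schwarz together with $\sup_t\int_\rea v^2\,\dd x\le 2qE_0$; the price is the extra factor $Y(t)=\exp\{\int_0^t\int_{I_k}\chi_x\sigma\}$, whose exponential decay is then proved by hand rather than coming for free from the far-field value $\sigma\to-a$.

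Two further steps of your plan also fail without the paper's localisation lemma (Lemma \ref{lemma Modified Localisation Trick}). First, the lower bound: from $u\ge C^{-1}\int_0^t\theta(x,s)e^{-a(t-s)/\mu}\,\dd s$ you cannot conclude using mere strict positivity of $\theta$, since a maximum-principle bound degrades in time (and the uniform-in-time lower bound for $\theta$ is only established in \S 5 \emph{using} the bounds on $u$, so invoking it here would be circular). What is needed, and what the paper proves, is a non-decaying bound $\int_s^t\theta(\tau,x)\,\dd\tau\ge N(t-s)e^{-1/(N(t-s))}$, obtained from Jensen's inequality, the entropy dissipation $\int_0^T\int_\rea\kappa\theta_x^2/(u\theta^2)\le qE_0$, and the anchor points $b_k(t)\in I_k$ with $\gamma_1\le\theta(t,b_k(t)),u(t,b_k(t))\le\gamma_2$, which in turn come from Jensen applied to $\psi(s)=s-1-\log s$ on unit intervals. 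Second, your closure of the upper bound: the Cauchy--Schwarz step that converts entropy dissipation into pointwise control of $\theta$, namely $|\sqrt{\theta}(t,x)-\sqrt{\theta}(t,b_k(t))|\le\big(\int\theta_x^2/(u\theta)\big)^{1/2}\big(\int u\theta\big)^{1/2}$, must be performed on a unit interval, because $\int_\rea u\theta\,\dd x=\infty$ under the far-field condition \eqref{eq_far-field condition}; the $L^1$ control of $\theta-1-\log\theta$ over the whole line does not by itself yield the pointwise bound that the Gr\"{o}nwall iteration between $u$ and $\theta$ requires. In short, the missing ingredient is the localisation trick; with it, your representation formula would have to be rederived in the localised form used in the paper.
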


A key ingredient of the proof  is the following lemma, which is a modification of the now-standard ``localisation trick'' in \cite{K82,KS77}:

\begin{lemma}\label{lemma Modified Localisation Trick}
There exists two universal constants $\gamma_1, \gamma_2$ such that
\begin{equation}\label{xxx}
0< \gamma_1 \leq \int_{I_k} u(t,x)\,\dd x, \int_{I_k} \theta(t,x)\,\dd x \leq \gamma_2 < \infty
\end{equation}
for all $k \in \mathbb{Z}$ and $t>0$; here $I_k=[k,k+1]$. Moreover, given any such $t$ and $k$, we can find $b_k(t)\in I_k$ so that 
\begin{equation}\label{ineq}
0<\gamma_1 \leq u\big(t,b_k(t)\big), \theta\big(t, b_k(t)\big) \leq \gamma_2 <\infty.
\end{equation}
\end{lemma}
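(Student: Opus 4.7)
The plan is to exploit the entropy inequality from Proposition 2.3, which gives a uniform in $t$ control on $\int_{\rea} f(u)\,\dx$ and $\int_{\rea} f(\theta)\,\dx$, where $f(s):=s-1-\log s \geq 0$. This convex, non-negative function vanishes only at $s=1$ and grows without bound as $s\to 0^+$ or $s\to\infty$, so it should simultaneously prevent large integrals of $u,\theta$ on unit intervals and prevent those integrals from collapsing to zero.

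For the integral bounds \eqref{xxx}, I would argue as follows. For the \textbf{upper bound}, pick $R$ large enough that $f(s)\geq s/2$ whenever $s\geq R$. Then for each $k\in\mathbb{Z}$ and $t\in[0,T]$,
\begin{equation*}
\int_{I_k} u\,\dx \leq R + \int_{I_k\cap\{u>R\}} u\,\dx \leq R + 2\int_{\rea} f(u)\,\dx \leq R + 2qE_0,
\end{equation*}
which is a universal constant $\gamma_2$. For the \textbf{lower bound}, I would apply Jensen's inequality to the concave function $\log$: since $|I_k|=1$,
\begin{equation*}
-\log\Big(\int_{I_k} u\,\dx\Big) \leq -\int_{I_k}\log u\,\dx \leq \int_{I_k}\bigl(f(u)+1\bigr)\,\dx \leq qE_0 + 1,
\end{equation*}
so $\int_{I_k}u\,\dx \geq e^{-(qE_0+1)}$. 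The same reasoning applies verbatim to $\theta$ in place of $u$.

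For the pointwise statement \eqref{ineq}, the integral bounds alone do not furnish a common point for both $u$ and $\theta$. The idea is to show that on each $I_k$, the ``good'' sets where $u$ (respectively $\theta$) lies in a fixed window $[\gamma_1,\gamma_2]$ each occupy more than half of $I_k$, so they must intersect. By Chebyshev applied to $\int_{\rea}f(u)\,\dx\leq qE_0$, the set where $u(t,\cdot)\geq M$ has measure at most $qE_0/f(M)$, and the set where $u(t,\cdot)\leq \epsilon$ has measure at most $qE_0/f(\epsilon)$; choosing $M$ large and $\epsilon$ small (universally) makes each of these measures less than $1/4$, so the middle set has measure exceeding $1/2$ within $I_k$. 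Running the same argument for $\theta$ and intersecting the two good sets (which must overlap since each has measure $>1/2$ in a set of measure $1$) yields $b_k(t)$. After enlarging $\gamma_2$ and shrinking $\gamma_1$ to accommodate both the integral and pointwise regimes, we obtain the claim.

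The main obstacle I anticipate is a mild one: confirming that $f$ indeed controls both tails strongly enough to make the Chebyshev estimates tight, and reconciling the constants used in the integral inequality with those in the pointwise inequality. The regularity hypothesis on the weak solution (Definition 1.1) guarantees that $u(t,\cdot)$ and $\theta(t,\cdot)$ are pointwise defined almost everywhere and Chebyshev's inequality applies straightforwardly, so no technical subtleties beyond this should arise.
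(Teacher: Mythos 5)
Your proposal is correct and follows essentially the same route as the paper: both rest on the entropy inequality to control $\int\psi(u)$ and $\int\psi(\theta)$ uniformly in time, with Jensen giving the unit-interval integral bounds and a Chebyshev-type measure estimate on the ``bad'' sets producing the common point $b_k(t)$ (the paper bounds the single exceptional set where either $\psi(u)$ or $\psi(\theta)$ is large, while you bound four tail sets and intersect the two good sets of measure exceeding $1/2$ --- a cosmetic difference). The only nitpick is that the entropy inequality yields $\int_{\rea}\psi(u)\,\dd x\leq qE_0/a$ rather than $qE_0$, which changes nothing in the argument.
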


\begin{proof}[Proof for Lemma \ref{lemma Modified Localisation Trick}]

Let us denote by
\begin{equation}
\psi (s):=  s-1-\log(s),
\end{equation}
which is a convex function on $[0,\infty)$. Then, on each space interval $I_{k}=[k,k+1]$, $k \in \mathbb{Z}$, applying the entropy formula \eqref{eq_entropy ineq} and Jensen's inequality we deduce that
	\begin{equation*}
	\begin{cases}
	\psi\Big(\int_{I_k} u(t,x)\,{\rm d}x\Big) \leq \int_{I_k} \psi(u(t,x))\,{\rm d}x \leq C(E_0)\\
	 \psi\Big(\int_{I_k} \theta(t,x)\,{\rm d}x\Big) \leq \int_{I_k} \psi(\theta(t,x))\,{\rm d}x \leq C(E_0).
	\end{cases}
	\end{equation*}
Moreover, as $\psi$ ({\it i.e.} is monotonically decreasing  from infinity to zero on $(0,1]$ and monotonically increasing  from zero to infinity on $[1,\infty)$), we can find two positive constants $\gamma_1, \gamma_2$ such that, for all $k \in \mathbb{Z}, t>0$, 
\begin{equation*}
0< \gamma_1 \leq \int_{I_k} u(t,x)\,{\rm d}x, \int_{I_k} \theta(t,x)\,{\rm d}x \leq \gamma_2 < \infty.
\end{equation*}
This prove the first part of the  lemma. 

For the second part, we fix a small constant $\epsilon \in (0,1/2)$. Then, we take any $t>0$ and consider the ``exceptional'' set:
\begin{equation}
\mathfrak{S}_k(t):=\{ x\in I_k: \theta(t,x)<\gamma_1 \text{ or }  \theta(t,x)>\gamma_2 \text{ or }  u(t,x)<\gamma_1 \text{ or }  u(t,x)>\gamma_2 \}.
\end{equation}

By investigating the graph of $\psi$ we note the following: On $\mathfrak{S}_k(t)$, either $\psi(\theta)$ or $a\psi(u)$ is greater than some large number $\tilde{K} = \tilde{K} (\gamma_1, \gamma_2) \geq 1$.  Thus, employing Eq. \eqref{eq_entropy ineq} and the Chebyshev's  inequality, we deduce that
\begin{equation}
\tilde{K} \Big|\mathfrak{S}_k(t)\Big| \leq \sup_{0 \leq t < T} \ir [a\psi(u) + \psi(\theta)]\, {\rm d}x \leq qE_0,
\end{equation}
where for a Borel set $B\subset\rea$ its one-dimensional Lebesgue measure is denoted as $|B|$. 

Now, we observe that  $\tilde{K}$ increases if either $\gamma_2$ increases or $\gamma_1$ decreases. Hence, by suitably choosing $\gamma_1, \gamma_2$ which depend only on $a, q, E_0$, we obtain the bound:
\begin{equation}
|\mathfrak{S}_k(t)| \leq 1-\epsilon
\end{equation}
uniformly in time. Therefore, for each $t\in[0,T)$, we pick an arbitrary $b_k(t) \in I_k \setminus \mathfrak{S}_k(t)$ to complete the proof.

\end{proof}

With Lemma \ref{lemma Modified Localisation Trick},  we are at the stage of proving our main theorem in this section. The proof is a straightforward adaptation of the estimates in \cite{J99}\cite{J02} by S. Jiang. In fact,  similar estimates have been obtained in \cite{CHT03}\cite{K82}\cite{SK81} and several other works, but not uniformly in time. The crucial observation in \cite{J99}\cite{J02} is that, although $\int_0^t \frac{\theta(\tau,x)}{u(\tau,x)} \,{\rm d}\tau$ is difficult to be bounded even at a single point $x=b_k(t)$, its {\em spatial average} $\int_s^t\int_{I_k} \frac{\theta(\tau,\xi)}{u(\tau,\xi)} \,{\rm d}\xi {\rm d}\tau$ can nevertheless be controlled (here $0\leq s<t\leq T$).

 Throughout the following proof, let us write $N$ for universal constants independent of $t,x,k$. In particular, the independence of $k$ will be justified at the end of the argument.

\begin{proof}[Proof for Theorem \ref{thm_uniform upper and lower bds for v}]

The proof is divided into three steps:

\smallskip
{\bf 1.}
	First, we choose a spatial cut-off function $\chi\in C^\infty_c([0,\infty)), \chi \equiv 1$ on $[0,k]$, $\chi\equiv 0$ on $(k+1, \infty)$ and $0 \leq \|\chi\|_{C^1} \leq 1$. Testing against the momentum equation \eqref{eq_momentum equation}, one obtains: 
	\begin{equation}\label{eq for eff viscous flux}
	-\int_x^\infty \big[v(t,\xi)\chi(\xi)\big]_t {\rm d}y = \sigma(t, x) + \int_{I_k} \chi_x(\xi)\sigma(t,\xi){\rm d}\xi \qquad \text{ for all } x \in I_k.
	\end{equation}
Here, $\sigma$ is the {\it effective viscous flux}, defined as
\begin{equation}
\sigma := \frac{\mu v_x - a\theta}{u}.
\end{equation}

Starting with Eq. \eqref{eq for eff viscous flux}, an integration over $[0,t]$ gives us:
\begin{align*}
&\int_x^\infty \big(v(t,\xi)-v_0(\xi) \big)\chi(\xi)\, {\rm d}\xi \\
 =\,& \mu \log\frac{u(t,x)}{u_0(x)} -a\int_0^t\frac{\theta}{u}(\tau,x)\,{\rm d}\tau + \int_0^t\int_{I_k} \chi_x(\xi)\sigma(\tau,\xi)\,{\rm d}\xi {\rm d}\tau.
\end{align*}
Then, we take the exponential of both sides to derive that
\begin{equation}
u(t,x) = u_0(x) \times \frac{\exp\Big\{\frac{1}{\mu}\int_x^\infty \Big[v(t,\xi)-v_0(\xi)\Big]\chi(\xi)\,{\rm d}\xi \Big\}\exp\Big\{\frac{a}{\mu}\int_0^t\frac{\theta(\tau,x)}{u(\tau,x)}\,{\rm d}\tau\Big\}}{\exp \Big\{\frac{1}{\mu}\int_0^t \int_{I_k}\chi_x(\xi)\sigma(\tau,\xi)\,{\rm d}\xi {\rm d}\tau\Big\}}.
\end{equation}
Now, introduce the following short-hand notations in the above expression:
\begin{equation}
\begin{cases}
B(t,x):= v_0(x) \exp\Big\{\frac{1}{\mu}\int_x^\infty \Big(v_0(\xi)-v(t,\xi)\Big)\chi(\xi)\,{\rm d}\xi \Big\}, \\
Y(t):= \exp \Big\{\int_0^t\int_{I_k}\chi_x(\xi)\sigma(\tau,\xi)\,{\rm d}\xi {\rm d}\tau\Big\}= \exp \Big\{\int_0^t\int_{I_k}\frac{\mu v_x(\tau,\xi) - a\theta(\tau,\xi)}{u(\tau,\xi)}\chi_x(\xi)\,{\rm d}\xi {\rm d}\tau\Big\}.
\end{cases}
\end{equation}
Thus we have 
\begin{equation*}
\frac{1}{Y(t)B(t,x)}=\frac{1}{u(t,x)}\exp\Big\{\frac{a}{\mu}\int_0^t\frac{\theta(\tau,x)}{u(\tau,x)}{\rm d}\tau\Big\}.
\end{equation*}
We multiply the above equation by $a\mu^{-1} \theta(t,x)$ and integrate over $t$ to obtain:
\begin{equation*}
\exp \Big\{\frac{a}{\mu}\int_0^t\frac{\theta(\tau,x)}{u(\tau,x)}{\rm d}\tau \Big\}= 1+ \frac{a}{\mu}\int_0^t \frac{\theta(\tau,x)}{Y(\tau)B(\tau, x)}{\rm d}\tau.
\end{equation*}
This leads to an explicit representation formula for the specific volume, namely 
\begin{equation}\label{eq_rep formula for v}
u(t,x) = Y(t)B(t,x) + a\mu^{-1} \int_0^t\frac{Y(t)B(t,x)\theta(t,x)}{Y(\tau)B(\tau,x)}{\rm d}\tau.
\end{equation}

\smallskip
{\bf 2.}
In this step we derive uniform bounds for $u$ based on the above representation formula. First, by $\sup_{0\leq t\leq T}\ir v^2(t,x) \dx\leq 2qE_0$ (which is an immediate consequence of the entropy formula, {\it i.e.}, Eq. \eqref{eq_entropy ineq}), one  concludes that
\begin{equation}\label{eq_bound for B}
0<N^{-1}\leq B(t,x)\leq N <\infty.
\end{equation}

Next, for any $0<s<t\leq T$,  a lower bound can be derived for $\int_s^t \theta(\tau,x) {\rm d}\tau$ on $I_k$ {\it uniformly in $k$}. For this purpose, we first employ Jensen's inequality to estimate
\begin{align}
\int_s^t \theta(\tau,x){\rm d}\tau &\geq (t-s) \exp\Big\{\int_s^t \frac{1}{t-s}\log(\theta) {\rm d}\tau\Big\}\nonumber\\
&= (t-s) \exp \Big\{\frac{1}{t-s} \int_s^t \Big[ \int_{b_{k}(t)}^x \frac{\theta_x (\tau,y)}{\theta(\tau,y)}{\rm d}y + \log \theta(\tau,b_k(t)) \Big] {\rm d}\tau\Big\}\nonumber\\
&\geq (t-s)\exp \Big\{N - \frac{1}{t-s}\Big|\int_s^t\int_{b_k(t)}^x \frac{\theta_x}{\theta}(\tau,y) {\rm d}y {\rm d}\tau\Big| \Big\}\nonumber\\
&\geq N(t-s) e^{-\frac{1}{N(t-s)}},
\end{align}
which holds in view of the inequalities $\int_0^T\ir \frac{\kappa \theta_x^2}{\theta^2} \,{\rm d}x{\rm d}t\leq qE_0$ (see Eq. \eqref{eq_entropy ineq}), $\int_{I_k} u(t,x){\rm d}x \leq \gamma_2$ (due to Lemma \ref{lemma Modified Localisation Trick}), and the concavity of $\log$. Then, we have:
\begin{align}
\int_s^t\int_{I_k} \sigma(\tau,x)\,{\rm d}x{\rm d}\tau & \leq  (-a + \epsilon) \int_s^t\int_{I_k} \frac{\theta(\tau,\xi)}{u(\tau,\xi)}\, {\rm d}\xi {\rm d}\tau 
+ C(\epsilon)\int_s^t\int_{I_k}\frac{\mu u_x^2}{u\theta}\, {\rm d}\xi {\rm d}\tau \nonumber\\
&\leq N - \frac{a}{2} \int_s^t\int_{I_k} \frac{\theta(\tau,\xi)}{u(\tau,\xi)}\, {\rm d}\xi {\rm d}\tau \nonumber\\
&\leq N - N\int_s^t \inf_{I_k} \theta(\tau,\cdot) \Big(\int_{I_k}\frac{1}{u(\tau,\xi)}{\rm d}\xi\Big)\, {\rm d}\tau \nonumber\\
&\leq N-N\int_s^t \inf_{I_k} \theta(\tau,\cdot) {\Big(\int_{I_k}u(\tau,\xi){\rm d}\xi\Big)}^{-1}\, {\rm d}\tau \leq N-N^{-1}(t-s),
\end{align}
for which one utilises Jensen's inequality, the lower bound on $\int_s^t\theta {\rm d}\tau$, Eq. \eqref{eq_entropy ineq}, as well as Lemma \ref{lemma Modified Localisation Trick}. Hence, for arbitrary $0 \leq \tau \leq t$ the following holds:
\begin{equation}\label{eq_bound for Y}
0\leq Y(t)\leq Ne^{-\frac{t}{N}},\qquad \frac{Y(t)}{Y(\tau)} \leq Ne^{-\frac{t-\tau}{N}}.
\end{equation}

\smallskip
{\bf 3.} Using the bounds in Eqs. \eqref{eq_bound for B} and \eqref{eq_bound for Y}, the representation formula \eqref{eq_rep formula for v} and the localisation trick (Lemma \ref{lemma Modified Localisation Trick}), we now  conclude that
\begin{equation}\label{inequality}
\begin{cases}
u(t,x) \leq N+ N\int_0^t \theta(\tau,x) e^{-(t-\tau)/N}{\rm d}\tau,\\
\gamma_1 \leq \int_{I_k} u(t,x){\rm d}x \leq Ne^{-t/N}+N\int_0^t{Y(t)}/{Y(\tau)}{\rm d}\tau \qquad \text{ on } [0,\infty) \times I_k.
\end{cases}
\end{equation}
On the other hand, we have a reverse inequality which bounds $\theta$ in terms of $u$:
\begin{align}\label{eq_reverse ineq}
\big|\sqrt{\theta}(t,x) - \sqrt{\theta}(t,b_k(t))\big| &\leq \int_{I_k} \frac{|\theta_x(t,x)|}{\sqrt{\theta}(t,x)}{\rm d}x\nonumber\\
&\leq \Big(\int_{I_k}\frac{\theta_x^2}{u\theta}(t,x) {\rm d}x\Big)^{\frac{1}{2}} \Big(\int_{I_k} u(t,x)\theta(t,x){\rm d}x\Big)^{\frac{1}{2}}\nonumber\\
&\leq  \sqrt{\gamma_2}\Big(\int_{I_k}\frac{\theta_x^2}{u\theta}(t,x) {\rm d}x\Big)^{\frac{1}{2}} \max_{I_k}\sqrt{\theta(t,\cdot)} \qquad \text{ on } [0,\infty) \times I_k,
\end{align}
again due to Lemma \ref{lemma Modified Localisation Trick}.

Finally, in view of Eq. \eqref{eq_entropy ineq}, an application of Gr\"{o}nwall lemma to Eqs. \eqref{eq_reverse ineq} and \eqref{inequality} gives us the uniform-in-time upper and lower bounds for $u$. In particular, the constants $N$ are independent of $k$. Thus the proof is now complete.

\end{proof}

\section{The Crucial Estimate for $v$ and $\theta$}

In this section we establish
a key estimate involving $v, \theta, v_x, \theta_x$ and suitable powers of them. This inequality is an adaptation of the key estimate in \cite{LL14} ({\it cf.} Lemma 2.2 therein). However, due to  the presence of the chemical reaction processes, extra work needs to be done in order to control the variable $Z$. 

Our main result in the current section is summarised as follows:

\begin{theorem}\label{thm_crucial estiamte in theta and u, Li Liang type}
	Let $(u,v,\theta, Z)$ be a weak solution to the system \eqref{eq_mass equation}-\eqref{eq_conditions on id} on $[0,T]\times\rea$. Then there exists $C_1>0$, depending only on the initial data, such that
	\begin{equation}
	\sup_{0\leq t\leq T} \ir \big[(\theta-2)_{+}^2 + v^4\big](t,x) \,{\rm d}x + \int_0^T \ir \big[(1+\theta+v^2) v_x^2 + \theta_x^2 \big](t,x) \,{\rm d}x {\rm d}t \leq C_1.
	\end{equation}

\end{theorem}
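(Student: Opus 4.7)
The strategy is to derive and couple two nonlinear energy identities---one obtained by testing the momentum equation \eqref{eq_momentum equation} with $v^3$, the other by testing the simplified temperature equation \eqref{eq_simplified temperature equation} with $(\theta-2)_+$---and then to close a Gr\"onwall-type differential inequality for
\begin{equation*}
\mathcal{E}(t) := \ir \bigl[(\theta-2)_+^2 + v^4\bigr](t,x)\dx,
\end{equation*}
using the uniform bounds $C_0^{-1}\leq u\leq C_0$ from Theorem \ref{thm_uniform upper and lower bds for v}, the entropy-type bounds of Propositions \ref{proposition_energy and entropy}--\ref{proposition_eq_entropy ineq}, and the uniform bounds $(\clubsuit)$ on $\phi$.

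First, testing \eqref{eq_momentum equation} against $v^3$ (boundary terms vanish via \eqref{eq_far-field condition} and the $H^1_x$ regularity of Definition \ref{def_weak and classsical solutions}, using that $H^1(\rea)\hookrightarrow C_0(\rea)$) produces
\begin{equation*}
\frac{1}{4}\frac{d}{dt}\ir v^4 \dx + 3\mu \ir \frac{v^2 v_x^2}{u}\dx = 3a \ir \frac{v^2 v_x \theta}{u}\dx.
\end{equation*}
Splitting $\theta = \min(\theta,2) + (\theta-2)_+$ on the right and applying Young's inequality absorbs part of the $v^2 v_x^2/u$ dissipation, leaving only the lower-order term $\ir v^2\dx$---bounded by $2qE_0$ thanks to Proposition \ref{proposition_eq_entropy ineq}---and the genuine cross term $\ir v^2(\theta-2)_+^2\dx$. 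Testing \eqref{eq_simplified temperature equation} against $(\theta-2)_+$ (using $\partial_x(\theta-2)_+ = \theta_x\,\mathbbm{1}_{\{\theta>2\}}$ in the $H^1$ sense) similarly yields
\begin{equation*}
\frac{1}{2}\frac{d}{dt}\ir (\theta-2)_+^2 \dx + \ir \mathbbm{1}_{\{\theta > 2\}}\frac{\kappa\theta_x^2}{u}\dx = \ir(\theta-2)_+ \Bigl[\frac{\mu v_x^2}{u} - \frac{a\theta v_x}{u} + qK\phi(\theta)Z\Bigr]\dx,
\end{equation*}
and the convective contribution $\ir (\theta-2)_+ a\theta v_x/u\dx$ is split via $\theta = 2 + (\theta-2)_+$ and absorbed into the two dissipations through Cauchy--Schwarz.

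The reaction term is the genuine novelty relative to \cite{LL14}. Since $(\clubsuit)$ delivers $\phi(\theta)\leq M$ \emph{uniformly in the mollification parameter} $\delta$, and since Lemma \ref{lemma_bdd of Z} together with Proposition \ref{proposition_energy and entropy} supply $0\leq Z\leq 1$ with $\ir Z\dx \leq E_0$, the Cauchy--Schwarz inequality yields $\ir (\theta-2)_+\phi(\theta)Z \dx \leq \epsilon \ir (\theta-2)_+^2\dx + C(E_0,M)$; crucially, the constant is independent of $\|\phi'\|_\infty$, which is what will later enable the passage to the discontinuous limit in \S 6. The strongly coupled cross terms $\ir v^2 (\theta-2)_+^2\dx$ and the viscous-heating contribution $\ir (\theta-2)_+ v_x^2/u\dx$ are then tamed by the one-dimensional Gagliardo--Nirenberg inequality $\|f\|_{L^\infty}^2 \leq C\|f\|_{L^2}\|f_x\|_{L^2}$ applied to $v$ and to $(\theta-2)_+$ in turn: the $L^\infty$ factors are traded for square-roots of the dissipations, producing a constant multiple of $\mathcal{E}(t)$ plus an arbitrarily small fraction of the dissipation that can be moved back to the left.

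Adding the two identities gives a differential inequality of the shape $\mathcal{E}'(t) + (\text{dissipation}) \leq C\mathcal{E}(t) + C$, and Gr\"onwall closes the $L^\infty_t$ bound on $\mathcal{E}$ uniformly in $T$. The weighted dissipation bound $\int_0^T\ir (1+\theta+v^2)v_x^2\dx\dt + \int_0^T\ir\theta_x^2\dx\dt$ is then extracted by decomposing the space-time domain across $\{\theta\leq 2\}$ and $\{\theta>2\}$: on the former, the entropy bounds $\int v_x^2/(u\theta)$ and $\int \theta_x^2/(u\theta^2)$ from Proposition \ref{proposition_eq_entropy ineq}, combined with $u\leq C_0$ and $\theta\leq 2$, suffice; on the latter, the dissipative terms produced in the combined estimate do the job, once one writes $\theta \leq (\theta-2)_+ + 2$ and invokes the $L^\infty_t L^4_x$ bound on $v$ just obtained. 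The principal obstacle throughout is the intertwining of the viscous heating $\mu v_x^2/u$ (a source in the temperature equation) with the convective $v^2 v_x \theta/u$ (a source in the $v^3$-test): neither identity closes on its own, and the one-dimensional Gagliardo--Nirenberg step is precisely what allows the combined dissipation to absorb the nonlinear cross terms.
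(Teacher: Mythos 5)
There is a genuine gap, and it is located exactly where your sketch is vaguest: the closure. You end with ``$\mathcal{E}'(t)+(\text{dissipation})\le C\mathcal{E}(t)+C$, and Gr\"onwall closes the $L^\infty_t$ bound on $\mathcal{E}$ uniformly in $T$.'' Gr\"onwall applied to that inequality gives at best $\mathcal{E}(t)\le(\mathcal{E}(0)+Ct)e^{Ct}$, which is \emph{not} uniform in $T$; but the whole point of the theorem (and of its later use in Lemma \ref{Lemma_bounds on higher derivatives} and in the large-time behaviour) is that $C_1$ depends only on the initial data. The same defect appears earlier in your treatment of the reaction term, where the bound $\epsilon\ir(\theta-2)_+^2\dx+C(E_0,M)$ produces, after time integration, a term growing linearly in $T$. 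The paper's proof is structured precisely to avoid any Gr\"onwall in time: every right-hand side contribution is either a small multiple of the left-hand dissipation (absorbed), or is controlled by quantities that are globally integrable in time thanks to the entropy inequality \eqref{eq_entropy ineq}, namely $\int_0^T\ir\frac{v_x^2}{u\theta}\dx\dt$ and $\int_0^T\ir\frac{\theta_x^2}{u\theta^2}\dx\dt\le qE_0$. The pivotal device you are missing is the estimate of $\int_0^T\sup_\rea\big[(\theta(t,\cdot)-\tfrac32)_+\big]^2\dt$ (Eq. \eqref{by parts add}): using $\sup_\rea(\theta-\tfrac32)_+^2\le\big(\int_{\Sigma_{3/2}(t)}|\theta_x|\dx\big)^2\le\big(\int\frac{\theta_x^2}{\theta}\big)\big(\int_{\Sigma_{3/2}(t)}\theta\big)$ together with the uniform bound on $\int_{\Sigma_{3/2}(t)}\theta\dx$ from Lemma \ref{lemma_identity for theta}, this quantity is bounded by an entropy term plus an absorbable fraction of $\int_0^T\int_{\Sigma_{3/2}(t)}\theta_x^2$, with no growth in $T$. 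All the ``cross'' terms (including the reaction term, via $(\theta-2)_+\le(\theta-\tfrac32)_+^2$ and $\ir Z\dx\le E_0$) are funneled into this quantity rather than into $\mathcal{E}(t)$.

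A second, related gap is your treatment of the viscous heating $\mu\ir\frac{(\theta-2)_+}{u}v_x^2\dx$, which appears as a source when you test \eqref{eq_simplified temperature equation} with $(\theta-2)_+$. Gagliardo--Nirenberg turns it into something like $\|(\theta-2)_+\|_{L^\infty}\ir v_x^2\dx$, a product of two quantities neither of which is a priori controlled: $\ir v_x^2\dx$ is part of the dissipation you are trying to prove, and it cannot serve as an integrable Gr\"onwall coefficient, nor can the product be absorbed with a small constant. The paper resolves this by \emph{also} testing the momentum equation with $2v(\theta-2)_+$ (Eq. \eqref{by parts 2}), so that the viscous heating reappears on the left with a good sign and becomes dissipation; the price is the collection of terms $I_1,\dots,I_6$ (including $\int\int_{\Sigma_2(t)}v^2(\kappa\theta_x/u)_x$, handled by the test-function trick), each of which is then controlled by the entropy inequality, Theorem \ref{thm_uniform upper and lower bds for v}, and the $\sup(\theta-\tfrac32)_+^2$ estimate above. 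Your $v^3$-test and your $(\theta-2)_+$-test coincide with Steps 4 and 1 of the paper, and your observation that the reaction term only needs $\|\phi\|_{L^\infty}$ and $\ir Z\dx\le E_0$ is correct; but without the $2v(\theta-2)_+$ coupling and without routing the remainders through the time-global entropy bounds instead of Gr\"onwall, the argument does not yield a $T$-independent constant and thus does not prove the stated theorem.
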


To simplify the presentation, let us collect several simple algebraic identities, which are to be repetitively invoked in the subsequent development:
\begin{lemma}\label{lemma_identity for theta}
Let us denote the spatial level sets by  
\begin{equation}
\Sigma_a(t):= \{x\in\rea: \theta(t,x) \geq a\},
\end{equation}
and write $\psi(s)=s-1-\log(s)$ on $\rea_{+}$ as before. Then,
\begin{enumerate}
\item[{\em (1)}]
For any $a>1$, there exists a universal constant $C=C(a)$, such that
\begin{equation}\label{algebraic identity 0}
\sup_{0\leq t<\infty} \int_{\Sigma_a(t)} \theta(t,x)\,{\rm d}x \leq C(a) \sup_{0\leq t <\infty} \ir \psi\big(\theta(t,x)\big)\, {\rm d}x \leq C(a)qE_0.
\end{equation} 

\item[{\em (2)}]
For $a>1$ there exists $C=C(a)$ such that
\begin{equation}\label{estimate on theta-1 square}
\sup_{0\leq t<\infty}\int_{\rea \setminus \Sigma_a(t)} \big(\theta(t,x)-1\big)^2 \, {\rm d}x \leq C(a) \sup_{0\leq t<\infty}\ir \psi \circ \theta(t,x)\,{\rm d}x \leq C(a)qE_0;
\end{equation}
\item[{\em (3)}]
We have the algebraic inequalities (where $B>0$ is a constant)
\begin{equation}\label{algebraic identity}
\begin{cases}
\theta^2\chi_{\Sigma_2(t)} \leq 16(\theta-3/2)^2_{+},\\
\theta(\theta-2)_{+} \leq 2(\theta-\frac{3}{2})_{+}^2,\\
(\theta-1)^2\chi_{\Sigma_2(t)}\leq B(\theta-\frac{3}{2})_{+}^2.
\end{cases}
\end{equation}
\item[{\em (4)}]
For any $\psi \in H^1(\rea)=W^{1,2}(\rea)$, we have
\begin{equation}\label{simple identity}
\sup_{x\in\rea} |\psi(x)|^2 \leq \|\psi'\|_{L^2(\rea)} \|\psi\|_{L^2(\rea)}\leq \|\psi\|_{H^1(\rea)}\|\psi\|_{L^2(\rea)}.
\end{equation}

\end{enumerate}
\end{lemma}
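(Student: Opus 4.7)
The lemma collects four elementary auxiliary facts; the plan is to dispatch each part independently, using only entropy bounds already established (for parts (1) and (2)) and direct calculation (for (3) and (4)). There is no deep obstacle in any piece---the only care needed is tracking how the constants depend on $a$.

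For part (1), I would analyse the ratio $g(s) := \psi(s)/s = 1 - 1/s - (\log s)/s$ on $[a,\infty)$ with $a > 1$. It is continuous, strictly positive at $s=a$ (since convexity of $\psi$ with minimum value $0$ at $s=1$ forces $\psi(a) > 0$ whenever $a \neq 1$), and tends to $1$ as $s \to \infty$. Hence $c(a) := \inf_{s \geq a} g(s) > 0$, giving the pointwise bound $\theta \leq c(a)^{-1}\psi(\theta)$ on $\Sigma_a(t)$. Integrating over $\Sigma_a(t)$ and applying the entropy inequality $\sup_t \int_{\rea} \psi(\theta)\,\dx \leq qE_0$ from Proposition~\ref{proposition_eq_entropy ineq} closes the estimate.

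For part (2), I aim to prove the companion pointwise inequality $(s-1)^2 \leq C(a)\psi(s)$ for $s \in (0,a)$ by showing the ratio $(s-1)^2/\psi(s)$ is uniformly bounded there. A second-order Taylor expansion $\psi(s) = (s-1)^2/2 + O((s-1)^3)$ near $s=1$ extends the ratio continuously to $s=1$ with value $2$; on $(0,a)\setminus\{1\}$ it is continuous and positive; and as $s \to 0^+$ the ratio vanishes because $\psi \to +\infty$ while $(s-1)^2 \to 1$. Integrating over $\rea \setminus \Sigma_a(t)$ and invoking the entropy bound again finishes this part.

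Part (3) is pure algebra on $\{\theta \geq 2\}$ (outside this set every asserted inequality is trivially $0 \leq 0$ or $0 \leq$ positive). Setting $r := \theta - 3/2 \geq 1/2$, one has $\theta = r + 3/2 \leq 4r$ (since $3/2 \leq 3r$), whence $\theta^2 \leq 16 r^2$. For the second inequality a direct expansion gives $2(\theta - 3/2)^2 - \theta(\theta - 2) = \theta^2 - 4\theta + 9/2$, which attains its minimum value $1/2 > 0$ at $\theta = 2$. For the third, $(\theta - 1)^2 = (r+1/2)^2 \leq 4r^2$ whenever $r \geq 1/2$, giving the claim with $B = 4$.

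For part (4) I would use the standard one-dimensional Agmon-type argument: for $\psi \in H^1(\rea)$, absolute continuity together with the decay of $\psi$ at infinity (both consequences of $\psi \in H^1(\rea)$) yield
\[
  \psi^2(x) = \int_{-\infty}^{x} 2\psi\psi'\,\dd y = -\int_{x}^{\infty} 2\psi\psi'\,\dd y.
\]
Averaging the two expressions and applying Cauchy--Schwarz produces $|\psi(x)|^2 \leq \|\psi'\|_{L^2(\rea)}\|\psi\|_{L^2(\rea)}$, and the second inequality is immediate from $\|\psi'\|_{L^2(\rea)} \leq \|\psi\|_{H^1(\rea)}$. If any step deserves to be called the ``hard'' one it is part (2), purely because one must verify that the boundary behaviour of the ratio at both $s \to 0^+$ and $s \to 1$ is under control; but even this is a one-line continuity argument.
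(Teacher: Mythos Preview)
Your proposal is correct and matches the paper's approach: the paper simply states that (1)--(3) ``follow from straightforward algebraic computations'' (mentioning only the explicit constant $C(a)=a/\psi(a)$ for (1) and that any $B>4/3$ works for (3)) and calls (4) ``the standard Sobolev inequality,'' so you have filled in precisely the details the paper omits. Your constant $B=4$ in part (3) is in fact sharp (the ratio $(\theta-1)^2/(\theta-3/2)^2$ equals $4$ at $\theta=2$), so the paper's remark that $B>4/3$ suffices appears to be a slip.
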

\begin{proof}

(1)--(3) follow from straightforward algebraic computations; we omit the details here. Let us only comment that in (1), the following choice of constant
\begin{equation*}
C(a)=\frac{a}{\psi(a)}=\frac{a}{a-1-\log(a)}
\end{equation*}
satisfies the requirement, as $\psi(s)$ has a double zero at $1$; also, in (3) any $B>\frac{4}{3}$ works. Finally, (4) is the standard Sobolev inequality corresponding to the embedding $H^1(\rea)\hookrightarrow C^0(\rea)$.

\end{proof}

\begin{proof}[Proof for Theorem \eqref{thm_crucial estiamte in theta and u, Li Liang type}]
	
We divide our arguments in four steps.

\smallskip
{\bf 1.} We start by deriving an energy estimate for the  temperature equation, in the form of Eq. \eqref{eq_simplified temperature equation}. The aim is to bound the $L^2$ norm of $\theta$ in the ``high-temperature region'', in terms of other dynamical variables.

For this purpose let us multiply $(\theta-2)_{+}$ to  Eq. \eqref{eq_simplified temperature equation}. This gives us
	\begin{align}\label{by parts 1}
	&(\theta-2)_{+}\theta_t + \kappa \frac{\theta_x}{u}\Big[(\theta-2)_{+}\Big]_x \nonumber\\
	=\quad & \frac{1}{2} \Big[(\theta-2)_{+}^2\Big]_t + \kappa\frac{1}{2} \big(\frac{\theta_x^2}{u}\big)_{x} \chi_{\Omega_2(t)}\nonumber\\
	=\quad & \Big[\frac{\kappa (\theta-2)_{+}\theta_x}{u}\Big]_x +\mu \frac{v_x^2(\theta-2)_{+}}{u} - a\frac{\theta}{u}u_x(\theta-2)_{+} + qK\phi(\theta)Z (\theta-2)_{+}.
	\end{align}
Noticing that $\big(\theta(t,x)-2\big)_{+} \rightarrow 0$ as $|x|\rightarrow \infty$, we  integrate over $[0,T]\times\rea$ to derive:
	\begin{align*}
	&\frac{1}{2} \ir \big[(\theta(T,x)-2)_{+}\big]^2 \dx + \int_0^T\int_{\Sigma_2(t)}\kappa\frac{\theta_x^2}{u} \,{\rm d}x{\rm d}t \\
	=\: & \frac{1}{2}\ir \big[(\theta_0(x)-2)_{+}\big]^2 \dx+ \int_0^T\ir \mu\frac{v_x^2}{u} (\theta-2)_{+} \,{\rm d}x{\rm d}t \\
	-\:&a\int_0^T\ir \frac{\theta v_x}{u}(\theta-2)_{+} \,{\rm d}x{\rm d}t + \int_0^T\ir qK\phi(\theta)Z(\theta-2)_{+} \,{\rm d}x{\rm d}t.
	\end{align*}
	
	On the other hand, multiplying $2v(\theta-2)_{+}$ to the momentum equation \eqref{eq_momentum equation} yields that
	\begin{align}\label{by parts 2}
	\Big[v^2(\theta-2)_{+}\Big]_t + 2\mu \frac{v_x^2}{u} &(\theta-2)_{+} =  2a\frac{\theta}{u}v_x(\theta-2)_{+} + 2a \frac{v\theta}{u} \Big[(\theta-2)_{+} \Big]_x -2\mu \frac{v_x^2}{u}(\theta-2)_{+}\nonumber\\
	-&2\mu \frac{vv_x}{u}\Big[(\theta-2)_{+}\Big]_x +2\Big[\frac{\mu v v_x(\theta-2)_{+} - av\theta(\theta-2)_{+}}{u}\Big]_x.
	\end{align}
	
Hence, integrating over $[0,T]\times \rea$, we obtain as follows:
	\begin{align*}
	&\ir \Big\{v^2(\theta-2)_{+}(T,x)\Big\} \,{\rm d}x + \int_0^T\ir \frac{2\mu(\theta-2)_{+}}{u}v_x^2  \,{\rm d}x{\rm d}t \\
	= & \ir \Big\{v_0^2(x)\big(\theta_0(x)-2\big)_{+}\Big\} \,{\rm d}x + 2a \int_0^T\ir \frac{\theta v_x}{u}(\theta-2)_{+} \,{\rm d}x{\rm d}t +2a\int_0^T\int_{\Sigma_2(t)} \frac{v\theta\theta_x}{u}\,  {\rm d}x{\rm d}t \\
-&2\mu \int_0^T \int_{\Sigma_2(t)}  \frac{vv_x}{u}\theta_x  \,{\rm d}x{\rm d}t + \int_0^T\int_{\Sigma_{2}(t)} v^2 \big[ (\theta-2)_{+}\big]_t \,{\rm d}x{\rm d}t.
	\end{align*}

Adding the above two integral expressions together,  evaluating $[(\theta-2)_{+}]_t$ on the level set $\Sigma_2(t)$, and employing the evolution equation \eqref{eq_simplified temperature equation} for $\theta$, we now arrive at:
\begin{align}\label{eq_estimating I1-I6}
&\frac{1}{2} \ir \Big\{ (\theta-2)_{+}^2 + v^2(\theta-2)_{+}\Big\}(T,x)\,{\rm d}x +\mu \int_0^T\ir  \frac{(\theta-2)_{+}}{u}v_x^2 \, {\rm d}x{\rm d}t + \kappa\int_0^T\int_{\Sigma_2(t)}  \frac{\theta_x^2}{u} \,{\rm d}x{\rm d}t\nonumber\\
=\:&\frac{1}{2}\ir \Big\{(\theta_0-2)_{+}^2 + v_0^2 (\theta_0 -2)\big\}(x)\,{\rm d}x + a\int_0^T\ir \Big\{\frac{\theta}{u} v_x(\theta-2)_{+} \Big\}\,{\rm d}x{\rm d}t \nonumber\\
&+ 2a\int_0^T\int_{\Sigma_2(t)}  \frac{v\theta \theta_x}{u} \,{\rm d}x {\rm d}t -2\mu \int_0^T \int_{\Sigma_2(t)}  \frac{vv_x}{u}\theta_x  \, {\rm d}x{\rm d}t + \int_0^T \int_{\Sigma_2(t)}  v^2 \big(\frac{\kappa\theta_x}{u}\big)_x \, {\rm d}x {\rm d}t \nonumber\\
&+ \int_0^T\int_{\Sigma_2(t)} \Big\{v^2 \frac{\mu v_x^2 - a\theta v_x}{u}\Big\} \, {\rm d}x{\rm d}t + \int_0^T\ir \Big\{qK\phi(\theta)Z\Big[(\theta-2)_{+}+v^2 \Big]\Big\} \,{\rm d}x{\rm d}t\nonumber\\
=:\:&\frac{1}{2}\ir \Big\{(\theta_0-2)_{+}^2 + v_0^2 (\theta_0 -2)\Big\}(x) \,{\rm d}x + \sum_{j=1}^6 I_j.
\end{align}

	\smallskip
	{\bf 2.} Now, our task is to estimate $I_1, I_2, \ldots, I_6$ term by term. To this end, we use Young's inequality (or Cauchy-Schwarz in the simplest form) repeatedly to separate each $I_j$ into one ``small'' and one ``large'' part: the ``small'' part can be absorbed by the left-hand sides, and the ``large'' part can be controlled via the uniform bounds established in \S 2, together with the uniform boundedness of $u$ ({\it cf.} Theorem \ref{thm_uniform upper and lower bds for v}).
	
\begin{itemize}
\item
	For $I_1$, using Eqs.  \eqref{algebraic identity 0}\eqref{algebraic identity}, we estimate as follows:
	\begin{align}\label{estimate for I1}
	|I_1| &\leq \epsilon_1 \int_0^T \ir \frac{(\theta-2)_{+}}{u}v_x^2 \, {\rm d}x{\rm d}t + C(\epsilon_1) \int_0^T\ir \theta^2(\theta-2)_{+} \,{\rm d}x{\rm d}t \nonumber\\
&\leq \epsilon_1 \int_0^T \ir \frac{(\theta-2)_{+}}{u}v_x^2 \, {\rm d}x{\rm d}t + C(\epsilon_1) \int_0^T \sup_{\rea} \big[(\theta(t, \cdot)-\frac{3}{2})_{+}\big]^2 \,{\rm d}t.
	\end{align}

\item
For $I_2$, notice that
\begin{align*}
|I_2| \leq \epsilon_2 \int_0^T\int_{\Sigma_2(t)}\frac{\theta_x^2}{u} {\rm d}x{\rm d}t + C(\epsilon_2) \int_0^T\int_{\Sigma_2(t)} v^2\theta^2 {\rm d}x{\rm d}t.
\end{align*}
Again, we use Eq. \eqref{algebraic identity}  and Eq. \eqref{eq_entropy ineq} to derive that
\begin{equation}\label{estimate_I2}
|I_2| \leq \epsilon_2 \int_0^T\int_{\Sigma_2(t)}\frac{\theta_x^2}{u} \,{\rm d}x{\rm d}t +C(\epsilon_2) \int_0^T \sup_\rea \big[(\theta(t,\cdot)-\frac{3}{2})_{+}\big]^2\, {\rm d}t.
\end{equation}

\item
For $I_3$, let us directly bound
\begin{equation}\label{estimate_I3}
|I_3| \leq \epsilon_3 \int_0^T\int_{\Sigma_2(t)} \frac{\theta_x^2}{u} \, {\rm d}x{\rm d}t +  C(\epsilon_3) \int_0^T \int_{\Sigma_2(t)} v^2v_x^2 \,{\rm d}x{\rm d}t.
\end{equation}
\item
$I_4:= \int_0^T\int_{\Sigma_2(t)} v^2 \big(\frac{\kappa \theta_x}{u}\big)_x\, {\rm d}x{\rm d}t$ is a term with special structure. By a standard trick, we integrate against a test function $\varphi(\theta)$:
\begin{align*}
\int_0^T\int_{\Sigma_2(t)}& v^2 \varphi(\theta) \Big[\frac{\kappa\theta_x}{u}\Big]_x \,{\rm d}x{\rm d}t  = \int_0^T\int_{\Sigma_2(t)} \Big[\frac{\kappa v^2\varphi(\theta)\theta_x}{u}\Big]_x \,{\rm d}x{\rm d}t\\
& - 2\kappa \int_0^T\int_{\Sigma_2(t)} \frac{vv_x\theta_x}{u}\varphi(\theta) \,{\rm d}x{\rm d}t - \kappa \int_0^T\int_{\Sigma_2(t)}\frac{v^2\varphi'(\theta)\theta_x^2}{u} \,{\rm d}x{\rm d}t.
\end{align*}
Hence, choosing a sequence of test functions $\varphi_\eta \in C^\infty[0,\infty)$ such that $\varphi_\eta(\theta) \equiv 0$ for $\theta \leq 2$, $\varphi_\eta(\theta) \equiv 1$ for $\theta \geq 2+\eta$, and $\varphi_\eta'(\theta)\geq 0$, we immediately get:
\begin{align}\label{estimate for I4}
I_4 &= \lim_{\eta \searrow 0} \int_0^T\int_{\Sigma_2(t)} v^2 \varphi_\eta(\theta) \Big[\frac{\kappa\theta_x}{u}\Big]_x \,{\rm d}x{\rm d}t \nonumber = \lim_{\eta \searrow 0} - 2\kappa \int_0^T\int_{\Sigma_2(t)} \frac{vv_x\theta_x}{u}\varphi_\eta(\theta)\, {\rm d}x{\rm d}t \nonumber\\
&\leq \epsilon_4  \int_0^T\int_{\Sigma_2(t)} \frac{\theta_x^2}{u}\,{\rm d}x{\rm d}t + C(\epsilon_4) \int_0^T\int_{\Sigma_2(t)}  v^2v_x^2\, {\rm d}x{\rm d}t.
\end{align}
\item
$I_5$ is simple: by Eqs.  \eqref{eq_entropy ineq} and \eqref{algebraic identity},
\begin{align}\label{estimate for I5}
|I_5| &\leq C \int_0^T\int_{\Sigma_2(t)} \big(v^2 v_x^2 + v^2\theta^2\big) \, {\rm d}x{\rm d}t \nonumber\\
&\leq C\int_0^T\int_{\Sigma_2(t)} v^2 v_x^2 \,{\rm d}x{\rm d}t + C\int_0^T \sup_\rea \big\{\big(\theta(t,\cdot)-\frac{3}{2}\big)_{+}\big\}^2 \,{\rm d}t.
\end{align}
\item
Finally let us deal with $I_6$, which is the term involving $Z$. In view of the boundedness of $\phi$ in the $C^0$-topology, $\sup_{0\leq t \leq T}\int_\rea Z (t,x)\dx \leq C$ ({\it cf}. Proposition \ref{proposition_energy and entropy}), and that $(\theta-2)_{+} \leq (\theta-3/2)_{+}^2$ ({\it cf.} Lemma \ref{lemma_identity for theta}), we achieve at the following:
\begin{equation}\label{estimate for I6}
\Big|\int_0^T\ir qK\phi(\theta)Z(\theta-2)_{+}\,{\rm d}x{\rm d}t\Big| \leq C \int_0^T \sup_\rea \big\{\big(\theta(t,\cdot)-\frac{3}{2}\big)_{+}\big\}^2 \,{\rm d}t.
\end{equation}

On the other hand, by Eq. \eqref{eq_entropy ineq} and the identity in Eq. \eqref{simple identity}, we have:
\begin{align}
\Big|\int_0^T \ir qK\phi(\theta) Z v^2\,{\rm d}x{\rm d}t\Big|  \leq \:& C\int_0^T \|v(t,\cdot)\|_{L^2(\rea)}\|v_x(t,\cdot)\|_{L^2(\rea)} \,{\rm d}t\nonumber\\
\leq \;& C \int_0^T\ir v_x^2 \,{\rm d}x{\rm d}t.
\end{align}
Thus
\begin{equation}
|I_6| \leq C\int_0^T\int_{\Sigma_2(t)} v_x^2 \,{\rm d}x{\rm d}t + C\int_0^T \sup_\rea \big\{\big(\theta(t,\cdot)-\frac{3}{2}\big)_{+}\big\}^2 \, {\rm d}t.
\end{equation}

\end{itemize}

	Now, we combine the previous estimates in Eqs.  \eqref{estimate for I1}--\eqref{estimate for I6} to control the right-hand side of Eq. \eqref{eq_estimating I1-I6}. Indeed, selecting $\epsilon_1 = \frac{1}{2}\mu$ and $\epsilon_2=\epsilon_3=\epsilon_4 = \frac{1}{4}\kappa$ proves the existence of a universal constant $C_2>0$, depending only on the initial data, $\mu,\kappa,a,q,K,\|\phi\|_{L^\infty}$ and $C_0$ in Theorem \ref{thm_uniform upper and lower bds for v}, so that for all $0 \leq t \leq T$ the following holds:
	\begin{align}\label{zzz}
	&\frac{1}{2} \ir \big\{(\theta-2)_{+}^2 + v^2(\theta-2)_{+}\big\}(t,x) \,{\rm d}x +\mu \int_0^T\ir \frac{\theta}{u}v_x^2 \,{\rm d}x{\rm d}t + \kappa\int_0^T\int_{\Sigma_2(t)} \frac{\theta_x^2}{u} \,{\rm d}x{\rm d}t\nonumber\\
\leq \: & C_2 + C_2 \int_0^T \sup_\rea \big\{ \big(\theta(t,\cdot)-\frac{3}{2}\big)_{+} \big\}^2 \, {\rm d}t + C_2 \int_0^T\int_{\rea}  (1+v^2)v_x^2 \, {\rm d}x{\rm d}t.
\end{align}

\smallskip
{\bf  3.} In the third step we estimate the term
\begin{equation}
\int_0^T \sup_\rea \big\{\big(\theta(t,\cdot)-\frac{3}{2}\big)_{+}\big\}^2 \, {\rm d}t
\end{equation}
on the right-hand side of Eq.  \eqref{zzz}. 

To wit, the identity \eqref{algebraic identity 0} and the entropy formula \eqref{eq_entropy ineq} imply that
\begin{align}\label{by parts add}
\int_0^T \sup_\rea \big\{\big(\theta(t,\cdot)-\frac{3}{2}\big)_{+}\big\}^2 \, {\rm d}t & \leq \int_0^T \sup_{x \in \rea} \Big\{\int_x^\infty -{\partial_x} \big(\theta(t,\xi) - 3/2\big)_{+} \, {\rm d}\xi\Big\} ^2 {\rm d}t\nonumber\\
&\leq \int_0^T\Big(\int_{\Sigma_{3/2}(t)} |\theta_x| \dx\Big)^2 \,{\rm d}t\nonumber\\
&\leq \int_0^T \Big(\int_{\Sigma_{3/2}(t)} \frac{|\theta_x|^2}{\theta} \dx\Big)\Big(\int_{\Sigma_{3/2}(t)} {\theta} dx\Big) \dt \nonumber\\
&\leq C\int_0^T \int_{\Sigma_{3/2}(t)} \frac{|\theta_x|^2}{\theta} \,{\rm d}x {\rm d}t \nonumber\\
&\leq C(\epsilon_5)\int_0^T \ir \frac{|\theta_x|^2}{u\theta^2} \,{\rm d}x{\rm d}t + \epsilon_5 \int_0^T \int_{\Sigma_{3/2}(t)} |\theta_x|^2 \,{\rm d}x{\rm d}t,
\end{align}
where Cauchy-Schwarz and the uniform boundedness of $u$ are used in the last line. Moreover, observe that the first term on right-hand side can be bounded by Eq. \eqref{eq_entropy ineq}, and by choosing $\epsilon_5 = \frac{\kappa}{4}$, the second term can be absorbed into the left-hand side of Eq. \eqref{zzz}. Thus, there is a universal constant $C_3>0$ such that for all $0\leq t \leq T$ we have:
	\begin{align}\label{eq_intermediate step}
	&\frac{1}{2}\ir \big\{(\theta-2)_{+}^2 + v^2(\theta-2)_{+}\big\}(t,x) \dx + \mu \int_0^T\ir \frac{\theta}{u}v_x^2 \,{\rm d}x{\rm d}t + \kappa \int_0^T\int_{\Sigma_2(t)}\frac{\theta_x^2}{u} \,{\rm d}x {\rm d}t \nonumber\\
	\leq\;& C_3 \times \Big(1+\int_0^T\ir (1+v^2)v_x^2 \,{\rm d}x{\rm d}t \Big).
	\end{align}

\smallskip
{\bf 4.} Finally it remains to bound the right-hand side of Eq. \eqref{eq_intermediate step}. For this purpose, we multiply  $(v^3)$ to the momentum equation \eqref{eq_momentum equation} and investigate the evolution of the $L^4$ norm of $v$, as in Kazhikhov-Shelukhin  (\cite{KS77}). In this manner we obtain:

\begin{equation}\label{by parts 3}
\frac{1}{4}\big(v^4\big)_t + 3\mu \frac{v^2 v_x^2}{u} = \Big(\frac{\mu v^3 v_x - av^3 \theta}{u}\Big)_x + 3a \frac{\theta}{u} v^2 v_x.
\end{equation}	
	
Hence, integrating over	$[0,T]\times\rea$, we find that
	\begin{align}\label{intermediate estimate 1 for u4+u2ux2}
	\sup_{0\leq t \leq T} \frac{1}{4} \ir v^4(t,x) \dx+ 3\mu  \int_0^T \ir \frac{v^2 v_x^2}{u} \,{\rm d}x{\rm d}t	= \frac{1}{4} \ir v_0^4(x)\dx +  3a \int_0^T \ir \frac{\theta}{u} v^2 v_x\, {\rm d}x{\rm d}t.
	\end{align}

To estimate the last term on the right-hand side, one makes use of the following observation in \cite{LL14}: $(u-1)$ is square-integrable due to the boundedness of $u$ and the integrability of $\psi(u)=u-1-\log u$ ({\it cf}. Theorem \ref{thm_uniform upper and lower bds for v} and Lemma \ref{lemma_identity for theta}). Hence, we consider
\begin{align*}
\int_0^T\int_{\rea} \frac{\theta}{u}v^2v_x \,{\rm d}x{\rm d}t &=  \int_0^T\int_{\rea} \frac{(1-u)v^2v_x}{u} \,{\rm d}x{\rm d}t + \int_0^T\int_{\Sigma_2(t)} \frac{(\theta-1)v^2v_x}{u} \, {\rm d}x{\rm d}t\\
&+ \int_0^T\int_{\rea\setminus\Sigma_2(t)} \frac{(\theta-1)v^2v_x}{u}  \,{\rm d}x{\rm d}t \\
 &=: K_1+K_2+K_3,
\end{align*}
and estimate $K_1, K_2, K_3$ as follows:
\begin{itemize}
\item
	For $K_1$, we bound
	\begin{align*}
	|K_1| &\leq C\int_0^T\Big\{ \big(\sup_{\rea}v^2(t,\cdot)\big) \|1-u(t,\cdot)\|_{L^2(\rea)}\|v_x(t,\cdot)\|_{L^2(\rea)} \Big\}\,{\rm d}t \\
	&\leq C\int_0^T\Big\{\|v(t,\cdot)\|_{L^2(\rea)} \|1-u(t,\cdot)\|_{L^2(\rea)}\|v_x(t,\cdot)\|^2_{L^2(\rea)}\Big\}\,{\rm d}t\\
	&\leq C\sup_{0\leq t\leq T}\|v(t,\cdot)\|_{L^2(\rea)}\sup_{0\leq t\leq T}\|1-u(t,\cdot)\|_{L^2(\rea)}\int_0^T\int_{\rea} v_x^2(t,x) \,{\rm d}x{\rm d}t\\
	&\leq C\int_0^T\int_{\rea} v_x^2(t,x) \,{\rm d}x{\rm d}t,
	\end{align*}
thanks to items (1)(4) in Lemma \ref{lemma_identity for theta} and the entropy formula, namely Eq. \eqref{eq_entropy ineq}. On the other hand, by Cauchy-Schwarz one has
\begin{align*}
\int_0^T\int_{\rea} v_x^2 \, {\rm d}x{\rm d}t &\leq \epsilon_6 \int_0^T\int_{\rea} \theta v_x^2 \,{\rm d}x{\rm d}t + C(\epsilon_6)\int_0^T\int_{\rea} \frac{v_x^2}{\theta u}\, {\rm d}x{\rm d}t,
\end{align*}
hence
\begin{equation}\label{K1}
|K_1|\leq \epsilon_6 \int_0^T\int_{\rea} \theta v_x^2 \,{\rm d}x{\rm d}t + C(\epsilon_6).
\end{equation}

\item
Similarly, to deal with $K_2$, Lemma \ref{lemma_identity for theta} gives us
\begin{equation*}
\sup_{0\leq t \leq T}\int_{\rea\setminus \Sigma_2(t)} (\theta-1)^2 {\rm d}x{\rm d}t\leq C,
\end{equation*} 
thus one readily derives 
\begin{equation}\label{K2}
|K_2|\leq \epsilon_7 \int_0^T\int_{\rea} \theta v_x^2 \,{\rm d}x{\rm d}t + C(\epsilon_7)
\end{equation}
via analogous arguments.

\item
Finally, $K_3$ is bounded as follows:
\begin{align*}
|K_3| &\leq \epsilon_8 \int_0^T\int_{\Sigma_2(t)}v^2v_x^2 \,{\rm d}x{\rm d}t + C(\epsilon_8)\Big(\sup_{0\leq t\leq T}\int_\rea v^2 \,{\rm d}x\Big)\Big(\int_0^T \sup_{\Sigma_2(t)}(\theta-1)^2 \,{\rm d}t \Big)\\
&\leq \epsilon_8 \int_0^T\int_{\Sigma_2(t)}v^2v_x^2 \,{\rm d}x{\rm d}t + C(\epsilon_8)\Big(\sup_{0\leq t\leq T}\int_\rea v^2  \,{\rm d}x\Big) \Big(\int_0^T\sup_{\rea}\big[\theta(t,\cdot)-\frac{3}{2}\big]_{+}^2  \,{\rm d}t\Big)\\
&\leq  \epsilon_8 \int_0^T\int_{\Sigma_2(t)}v^2v_x^2 \, {\rm d}x{\rm d}t + \epsilon_9 \int_0^T\int_{\Sigma_{\frac{3}{2}}(t)} \theta_x^2(t,x) \,{\rm d}x{\rm d}t + C(\epsilon_8, \epsilon_9),
\end{align*}
where in the final line one utilises Eq. \eqref{by parts add}. 
\end{itemize}

Finally, we select $\epsilon_6, \epsilon_7, \epsilon_8, \epsilon_9$ so small that the corresponding terms get absorbed into the left-hand side of Eq. \eqref{eq_intermediate step}.   The proof is completed by putting $K_1, K_2, K_3$ together.

\end{proof}

\section{Completion of the Proof of Theorems   \ref{Theorem for global existence of weak solution} and \ref{thm_large time behaviour}}

With the above preparations, we finally arrive at the stage of proving the main results of the paper, {\it i.e.}, Theorems  \ref{Theorem for global existence of weak solution} and \ref{thm_large time behaviour}, concerning the global existence and large-time behaviour of Eqs. \eqref{eq_mass equation}--\eqref{eq_conditions on id}. 

This final section is organised as follows: First, let us derive some uniform bounds for the higher derivatives of $(u,v,\theta,Z)$. As a by-product, the temperature $\theta$ is uniformly bounded from the above. Then, employing these bounds and investigating the limiting process $T \rightarrow \infty$, we are able to deduce the large-time behaviour, {\it i.e.}, Theorem \ref{thm_large time behaviour}. Thus the uniform lower bound for $\theta$ can be deduced, which  agrees with the physical law that the absolute zero temperature cannot be reached. As both the upper and the lower bounds for $\theta$ are at hand, our local (in time) estimates can be extended globally. Finally, the global existence of weak solutions are derived as a corollary of the  estimates aforementioned.

\begin{lemma}\label{Lemma_bounds on higher derivatives}
There exists a universal constant $C_5$ such that  the following estimate holds for the weak solution on $[0,T)\times\rea$:
\begin{align}\label{aaa}
&\sup_{0 \leq t \leq T} \ir \big( u_x^2 + v_x^2 + \theta_x^2+ Z_x^2 \big) \,{\rm d}x \nonumber\\
 +\:& \int_0^T\ir \big(\theta u_x^2 + u_{xt}^2 + v_{xx}^2 + \theta_{xx}^2 + Z_{xx}^2 +v_t^2 + \theta_t^2 + Z_t^2 \big) \,{\rm d}x{\rm d}t \leq C_5.
\end{align}
Moreover, $\theta$ is uniformly bounded from above:
\begin{equation}
\supt \leq C_5. 
\end{equation}
\end{lemma}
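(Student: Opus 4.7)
The plan is to run a cascade of higher-order energy estimates in which the dissipative terms produced in one step feed the next: first control $\|u_x(t,\cdot)\|_{L^2}$ together with $\int_0^T\!\!\int \theta u_x^2$, then $\|v_x(t,\cdot)\|_{L^2}$ together with $\int_0^T\!\!\int v_t^2$, next $\|\theta_x(t,\cdot)\|_{L^2}$ together with $\int_0^T\!\!\int \theta_t^2$ (coupled through Sobolev embedding to $\|v_x\|_{L^\infty}$), and finally the analogous bound for $Z_x$. Throughout, the only input beyond Theorems \ref{thm_uniform upper and lower bds for v} and \ref{thm_crucial estiamte in theta and u, Li Liang type} is the $C^0$-bound on $\phi$ from $(\clubsuit)$ and the pointwise bound $0\le Z\le 1$.

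For the $u_x$-estimate I would introduce the modified flux $w := \mu u_x/u - v$. A direct computation using Eqs.~\eqref{eq_mass equation}--\eqref{eq_momentum equation} gives $w_t = a(\theta/u)_x$; multiplying by $w$, integrating in $x$, and performing one further integration by parts produces
\begin{equation*}
\frac{1}{2}\frac{d}{dt}\int_{\mathbb{R}} w^2\,dx + a\mu\int_{\mathbb{R}} \frac{\theta u_x^2}{u^3}\,dx = a\mu\int_{\mathbb{R}} \frac{u_x \theta_x}{u^2}\,dx + a\int_{\mathbb{R}} v_x\frac{\theta}{u}\,dx.
\end{equation*}
The first right-hand term splits by Cauchy--Schwarz into a small multiple of $\int \theta u_x^2$ (absorbable) and a multiple of $\int \theta_x^2/(\theta u)$ (integrable in $t$ by the entropy inequality); the second term is handled by writing $v_x\theta/u = v_x + v_x(\theta/u - 1)$, noting $\int v_x\,dx=0$ by the far-field condition, and controlling $\|\theta/u-1\|_{L^2}$ via Theorem \ref{thm_crucial estiamte in theta and u, Li Liang type} and the entropy bound. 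The bound on $v_x$ is then obtained by testing Eq.~\eqref{eq_momentum equation} against $v_t$, which produces $\tfrac{d}{dt}\int v_x^2/u + 2\int v_t^2$ on the left, whereas the lower-order integrals $\int v_x^3/u^2$ and $\int v_t(a\theta/u)_x$ on the right are tamed using the crucial estimate and the just-obtained control of $\int \theta u_x^2$.

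For $\theta_x$ the real difficulty appears. Multiplying Eq.~\eqref{eq_simplified temperature equation} by $\theta_t$ and integrating generates the viscous heating contribution $\int v_x^2 \theta_t$, which is not directly absorbable. The remedy, which I expect to be the main technical obstacle of the lemma, is to split it by Young's inequality into $\epsilon\|\theta_t\|_{L^2}^2 + C\int v_x^4$ and then estimate the quartic term by $\|v_x\|_{L^\infty}^2\int v_x^2 \le \|v_x\|_{L^2}\|v_{xx}\|_{L^2}\int v_x^2$, where $v_{xx}$ is read off algebraically from the momentum equation as $\mu v_{xx}/u = v_t + (a\theta/u)_x + \mu u_x v_x/u^2$. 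This forces the $v_x$- and $\theta_x$-estimates to be closed simultaneously through a single Grönwall argument on $\int(v_x^2+\theta_x^2)\,dx$, combined with the integral information from Theorem \ref{thm_crucial estiamte in theta and u, Li Liang type}. The bound for $Z_x$ is strictly easier: testing Eq.~\eqref{eq_reaction equation} by $Z_t$ is straightforward since $\phi$ is uniformly bounded and $Z\in[0,1]$, so no bootstrap is required and the diffusion $d Z_x^2/u^2$ immediately dominates.

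Once $\theta-1\in L^\infty(0,T;H^1(\mathbb{R}))$ has been established, the one-dimensional embedding recorded in Eq.~\eqref{simple identity} yields $\sup_{[0,T]\times\mathbb{R}}\theta\le C_5$ at once. The remaining dissipation-type bounds on $v_{xx}$, $\theta_{xx}$, $Z_{xx}$, and on $u_{xt}=v_{xx}$ are then algebraic consequences of the PDEs themselves: rearrange each of Eqs.~\eqref{eq_momentum equation}, \eqref{eq_simplified temperature equation}, \eqref{eq_reaction equation} for the top spatial derivative and take $L^2_{t,x}$-norms, using the $L^2_{t,x}$ bounds on $v_t,\theta_t,Z_t$ together with the newly acquired $L^\infty$ bound on $\theta$, the lower and upper bounds on $u$ from Theorem \ref{thm_uniform upper and lower bds for v}, and the $L^\infty$-control of $\phi$.
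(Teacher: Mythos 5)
Your overall cascade ($u_x$, then $v_x$, then $\theta_x$ coupled to $v_{xx}$, then $Z_x$, then $\sup\theta$ via the embedding \eqref{simple identity}) is the same strategy as the paper's, and replacing the multipliers $v_{xx},\theta_{xx},Z_{xx}$ by $v_t,\theta_t,Z_t$ is a harmless variant (it even avoids ever differentiating $\phi$, so the independence of the constants from $\|\phi'\|$ is automatic). However, there is a genuine gap in your first step, and it occurs exactly where uniformity in $T$ is at stake. Recall that $C_5$ must be independent of $T$ (this is the paper's convention for universal constants, and the proof of Theorem \ref{thm_large time behaviour} sends $T\to\infty$ using this lemma). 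Your identity for $w=\mu u_x/u - v$ is correct, but the term $a\int_0^T\ir v_x\,\theta/u \dx\dt$ cannot be handled the way you propose: after removing $\ir v_x\dx=0$, the remainder is only controlled by $\int_0^T\|v_x\|_{L^2}\|\theta/u-1\|_{L^2}\dt$, and the available bounds give $\sup_t\|\theta/u-1\|_{L^2}\le C$ and $\int_0^T\|v_x\|_{L^2}^2\dt\le C_1$, hence a bound of order $\sqrt{T}$ (any Young splitting leaves a term like $\int_0^T\|\theta/u-1\|_{L^2}^2\dt$, which grows linearly in $T$). There is no time-decay information on $\theta/u-1$ at this stage, so this route does not produce a universal constant.

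The cure is precisely what the paper's choice of multiplier accomplishes: include the kinetic energy in the functional you differentiate. Since $\tfrac12\frac{d}{dt}\ir v^2\dx = -\mu\ir v_x^2/u\dx + a\ir v_x\theta/u\dx$, working with $\tfrac12\ir(w^2-v^2)\dx$ (equivalently, multiplying $v_t+a(\theta/u)_x=\mu(\log u)_{xt}$ by $(\log u)_x$, as in Eq. \eqref{by parts 4}) replaces the problematic term by $\mu\ir v_x^2/u\dx$, whose space-time integral is bounded uniformly in $T$ by Theorem \ref{thm_crucial estiamte in theta and u, Li Liang type}; the lower-order correction $\ir v^2\dx\le 2qE_0$ is harmless. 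Two smaller points: your bound on the cross term needs Theorem \ref{thm_crucial estiamte in theta and u, Li Liang type} in addition to the entropy inequality (split the region $\theta\le 2$, where $\theta_x^2/(\theta u)\lesssim \theta_x^2/(u\theta^2)$, from $\theta>2$, where it is $\lesssim\theta_x^2$), not the entropy inequality alone; and in your $\theta_x$ step the term $a\ir (\theta/u)v_x\theta_t\dx$ reintroduces powers of $\sup\theta$, so the final assertion ``once $\theta-1\in L^\infty(0,T;H^1)$ is established the embedding yields $\sup\theta\le C_5$ at once'' hides a bootstrap: you must track the powers of $\supt$ appearing in the $\theta_x$ estimate and close, as in Step 5 of the paper's proof, by comparing growth rates in $\|(\theta-2)_{+}\|_{C(\rea)}^2\le\|(\theta-2)_{+}\|_{L^2(\rea)}\|\theta_x\|_{L^2(\rea)}$; this works provided the power produced is subquadratic, which your scheme does give but should be verified explicitly.
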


\begin{proof}

Before carrying out the estimates, we notice that the terms in Eq. \eqref{aaa} involving $u_{xt}^2, v_t^2, \theta_t^2, Z_t^2$ are bounded by the other terms in the same equation: This is an immediate consequence of Eqs.  \eqref{eq_mass equation} -- \eqref{eq_reaction equation}. Therefore, we only need to bound the {\em spatial} derivatives, which is shown in the following five steps:
	 
	 \smallskip
	 {\bf 1.} First of all, let us estimate the derivatives of $u$. Substituting the mass equation \eqref{eq_mass equation} in the momentum equation \eqref{eq_momentum equation}, one deduces that
\begin{equation*}
	 v_t +a \big(\frac{\theta}{u}\big)_x = \mu \big(\log(u)\big)_{tx}.
\end{equation*}
Then, multiplying $\big(\log(u)\big)_x$ to both sides, we obtain:
\begin{equation}\label{by parts 4}
\frac{\mu}{2} \Big((\log u)_x^2\Big)_t + a \frac{\theta u_x^2}{u^3} = \Big(v (\log u)_t\Big)_x + \frac{u_x\theta_x}{u^2} + \Big(v\frac{u_x}{u}\Big)_t - \frac{v_x^2}{u}.
\end{equation}
	In view of Theorem \ref{thm_crucial estiamte in theta and u, Li Liang type} and the entropy formula \eqref{eq_entropy ineq}, we integrate over $[0,T]\times\rea$ to get
	\begin{align*}
	&\sup_{0\leq t\leq T} \ir \frac{u_x^2}{u^2} \dx +\int_0^T\ir \theta u_x^2 \,{\rm d}x{\rm d}t\\
	\leq \,&  \epsilon_{10}\int_0^T\ir  \theta u_x^2 \,{\rm d}x{\rm d}t + C(\epsilon_{10}) \int_0^T\ir  \frac{\theta_x^2}{\theta^2}  \,{\rm d}x{\rm d}t + C(\epsilon_{10}) \int_0^T \ir {\theta^2} \,{\rm d}x{\rm d}t\\
	&\: + C(\epsilon_{10}) \int_0^T\ir \frac{v_x^2}{\theta} \,{\rm d}x{\rm d}t + \epsilon_{10} \sup_{0\leq t\leq T} \ir \frac{u_x^2}{u^2}\,{\rm d}x + C(\epsilon_{10})\sup_{0\leq t\leq T} \ir v^2 \,{\rm d}x{\rm d}t\\
	\leq& \epsilon_{10} \sup_{0\leq t\leq T} \ir \frac{u_x^2}{u^2} \,{\rm d}x + \epsilon_{10}\int_0^T\ir  \theta u_x^2 \,{\rm d}x{\rm d}t + C(\epsilon_{10}).
	\end{align*}
So, by choosing suitably small $\epsilon_{10}$, the above estimates give us
\begin{equation}\label{eq_interm estimate for higher derivative bounds, 1}
\sup_{0\leq t\leq T} \ir u_x^2 \,{\rm d}x + \int_0^T\ir \theta u_x^2 \,{\rm d}x{\rm d}t \leq C_6.
\end{equation}

\smallskip
{\bf 2.} Now we estimate the derivatives of $v$ by multiplying $v_{xx}$ to the momentum equation \eqref{eq_momentum equation}. In this way one gets
\begin{equation}\label{by parts 5}
\frac{1}{2} \big(v_x^2 \big)_t + \frac{\mu}{u} (v_{xx})^2 = \big(v_xv_t\big)_x + \mu\frac{v_xu_xv_{xx}}{u^2} - a\frac{v_{xx}\theta_x}{u} + a\frac{\theta u_x v_{xx}}{u^2}.
\end{equation}
Thus we obtain:
\begin{align*}
&\sup_{0\leq t \leq T} \ir v_x^2 \, {\rm d}x + \int_0^T \ir v_{xx}^2\, {\rm d}x{\rm d}t \\
\leq\:& \epsilon_{11} \int_0^T \ir v_{xx}^2 \, {\rm d}x{\rm d}t + C(\epsilon_{11})\int_0^T \ir \frac{v_x^2}{\theta} \,{\rm d}x{\rm d}t  \nonumber\\
+\:& 2C(\epsilon_{11})\Big\{\sup_{[0,T]\times\rea}\theta(\cdot,\cdot)\Big\} \int_0^T \ir \theta u_x^2 \,{\rm d}x{\rm d}t + C(\epsilon_{11})\int_0^T \ir \theta_x^2 \,{\rm d}x{\rm d}t.
\end{align*}
The last three terms on the right-hand side are bounded by the entropy formula \eqref{eq_entropy ineq}, Theorem \ref{thm_crucial estiamte in theta and u, Li Liang type} and Eq.  \eqref{eq_interm estimate for higher derivative bounds, 1} in Step 1 of the same proof. Thus, choosing $\epsilon_{11}$ suitably small, we arrive at the following: \begin{equation}\label{eq_interm estimate for higher derivative bounds, 2}
\sup_{0\leq t \leq T} \ir v_x^2 \,{\rm d}x + \int_0^T \ir v_{xx}^2 \,{\rm d}x{\rm d}t \leq C_7 \; \Big(1+\sup_{[0,T]\times\rea} \theta \Big).
\end{equation}

\smallskip
{\bf 3.} Next, let us estimate the derivatives of $Z$, which is specific to our problem of the reacting mixture. We multiply $Z_{xx}$ to Eq. \eqref{eq_reaction equation} to get
\begin{equation}\label{by parts 6}
\frac{\big(Z_x^2\big)_t}{2} + \frac{d}{u^2} Z_{xx}^2 = \big[(Z_t+ K\phi(\theta) Z) Z_x\big]_x  - 2d\frac{u_x Z Z_x Z_{xx}}{u^3}.
\end{equation}

Now recall that $0 \leq Z \leq 1$ always holds (Lemma \ref{lemma_bdd of Z}); so, thanks to the Sobolev inequality in Eq. \eqref{simple identity}, the following estimates are valid:
\begin{align}
&\sup_{0 \leq t \leq T} \ir Z_x^2(t,x) \,{\rm d}x + \int_0^T\ir Z_{xx}^2 \,{\rm d}x{\rm d}t \nonumber\\
\leq\:&  \epsilon_{12}\int_0^T\ir Z_{xx}^2 \,{\rm d}x{\rm d}t+ C(\epsilon_{12})\Big\{ \sup_{0\leq t \leq T}\ir |u_x|^2 \,{\rm d}x\Big\}\Big\{\sup_{x\in\rea}\int_0^T |Z_x|^2 \,{\rm d}t\Big\}\nonumber\\
\leq\:& \epsilon_{12}\int_0^T\ir Z_{xx}^2 \,{\rm d}x{\rm d}t + C(\epsilon_{12}) \int_0^T \|Z_x(t,\cdot)\|_{L^2(\rea)} \|Z_{xx}(t,\cdot)\|_{L^2(\rea)} \,{\rm d}t \nonumber\\
\leq\:&  \epsilon_{12}\int_0^T\ir Z_{xx}^2 \,{\rm d}x{\rm d}t + \epsilon_{13}\int_0^T\ir Z_{xx}^2(t,x) \,{\rm d}x + C(\epsilon_{12},\epsilon_{13})\int_0^T\ir Z_x^2(t,x) \,{\rm d}x{\rm d}t.
\end{align}

On the other hand, multiplying $Z$ to Eq. \eqref{eq_reaction equation} leads to:
\begin{equation*}
\frac{1}{2} (Z^2)_t + K\phi(\theta)Z^2 + \frac{d}{u^2}(Z_x)^2 = \big(\frac{d}{u^2}ZZ_x\big)_x.
\end{equation*}
As shown in Lemma \ref{lemma_bdd of Z}, the $L^2$ norm of $Z$ decreases in time; thus
\begin{equation}
\int_0^T\ir Z_x^2 \,{\rm d}x{\rm d}t\leq \frac{1}{2}\|Z(T,\cdot)\|^2_{L^2(\rea)}\leq\frac{1}{2} \|Z_0\|_{L^2(\rea)},
\end{equation}
which leads to the conclusion as follows:
\begin{equation}\label{eq_interm estimate for higher derivative bounds, 3}
\sup_{0\leq t \leq T}\ir Z_x^2 \, {\rm d}x + \int_0^T \ir Z_{xx}^2 \,{\rm d}x{\rm d}t + \int_0^T \ir Z_x^2 \,{\rm d}x{\rm d}t \leq C_8.
\end{equation}

\smallskip
{\bf 4.} In this step we  establish the bounds for derivatives of $\theta$. As before, multiplying $\theta_{xx}$ to the temperature equation \eqref{eq_simplified temperature equation} yields:
\begin{align}\label{by parts special}
\frac{1}{2}\big(\theta_x^2\big)_t + \frac{\kappa}{u}\theta_{xx}^2 = \big(\theta_t \theta_x \big)_x 
+ \kappa\frac{\theta_x u_x \theta_{xx}}{u^2} - qK\phi(\theta)Z_x \theta_x +a\frac{\theta}{u}v_x \theta_{xx} - \mu \frac{v_x^2 \theta_{xx}}{u}.
\end{align}

Now, we integrate over $[0,T]\times \rea$ and repetitively use Eq. \eqref{eq_entropy ineq}, Theorem \ref{thm_crucial estiamte in theta and u, Li Liang type}, Eq. \eqref{simple identity}, Young's inequality, 
as well as Eqs. \eqref{eq_interm estimate for higher derivative bounds, 1}\eqref{eq_interm estimate for higher derivative bounds, 2} and \eqref{eq_interm estimate for higher derivative bounds, 3} in the previous steps of the same proof, to derive the following inequality:
\begin{align*}
&\sup_{0\leq t \leq T} \ir \theta_x^2 \,{\rm d}x + \int_0^T\ir \theta_{xx}^2 \,{\rm d}x{\rm d}t\\
\leq&\: C \Big\{ \int_0^T \|\theta_{xx}\|_{L^2(\rea)} \|\theta_x\|_{L^\infty(\rea)} \|u_x\|_{L^2(\rea)}{\rm d}t +  \int_0^T \|Z_x\|_{L^2(\rea)}\|\theta_x\|_{L^2(\rea)} \,{\rm d}t \\
+ \,&  \Big(\supt^{\frac{3}{2}}\Big)\times \int_0^T \|\theta_{xx}\|_{L^2(\rea)} \|\frac{v_x}{\sqrt{\theta}} \|_{L^2(\rea)} \,{\rm d}t +  \int_0^T \|v_x\|_{L^2(\rea)}\|v_x\|_{L^\infty(\rea)}\|\theta_{xx}\|_{L^2(\rea)} \,{\rm d}t\Big\}.
\end{align*}

In the sequel let us bound each of the four terms on the right-hand side of the preceding expression. For the first term, we consider
	\begin{align*}
&\int_0^T \Big(\|\theta_{xx}\|_{L^2(\rea)} \|\theta_x\|_{L^\infty(\rea)} \|u_x\|_{L^2(\rea)}\Big) \,{\rm d}t \\
\leq\:& \sup_{0\leq t \leq T} \|u_x(t,\cdot)\|_{L^2(\rea)}\int_0^T {\Big(\int_{\rea} \theta_{xx}^2dx\Big)^{\frac{3}{4}} }\Big(\int_{\rea}\theta_x^2 \dx\Big)^{\frac{1}{4}} \dt\\
\leq\:& \epsilon_{14}\int_0^T\int_{\rea}\theta_{xx}^2 \,{\rm d}x{\rm d}t + C(\epsilon_{14})\int_0^T\int_{\rea} \theta_x^2 \,{\rm d}x{\rm d}t\\
\leq\:&\epsilon_{14}\int_0^T\int_{\rea}\theta_{xx}^2 \,{\rm d}x{\rm d}t + C(\epsilon_{14}),
	\end{align*}
where we have used Young's inequality
\begin{equation*}
ab \leq \frac{3a^{4/3}}{4} + \frac{b^4}{4} \qquad \text{ for } a,b>0,
\end{equation*}
as well as Eq. \eqref{eq_interm estimate for higher derivative bounds, 1} and the entropy formula \eqref{eq_entropy ineq}.

The second term is easily bounded as follows:
	\begin{align*}
	\int_0^T\|Z_x\|_{L^2(\rea)}\|\theta_x\|_{L^2(\rea)} \,{\rm d}t \leq \epsilon_{15} \sup_{0\leq t \leq T} \ir\theta_x^2 \dx + C(\epsilon_{15}).
	\end{align*}

For the third term we compute as follows:
	\begin{align*}
	& \Big(\supt^{\frac{3}{2}}\Big) \int_0^T \|\theta_{xx}\|_{L^2(\rea)} \|\frac{v_x}{\sqrt{\theta}} \|_{L^2(\rea)} \,{\rm d}t \\
	 \leq\:& \frac{1}{2}\supt^{3}+\frac{1}{2}\Big\{\int_0^T \|\theta_{xx}\|_{L^2(\rea)} \|\frac{v_x}{\sqrt{\theta}} \|_{L^2(\rea)} \,{\rm d}t\Big\}^2 \\
	 \leq\:& \frac{1}{2}\supt^{3}+\frac{1}{2}\Big(\epsilon_{16}\int_{0}^T\ir \theta_{xx}^2 \,{\rm d}x{\rm d}t + C(\epsilon_{16})\int_0^T\ir \frac{v_x^2}{\theta} \,{\rm d}x{\rm d}t \Big)\\
	 \leq\:& \epsilon_{16}\int_{0}^T\ir \theta_{xx}^2 \, {\rm d}x{\rm d}t + C(\epsilon_{16}) \supt^3.
	\end{align*}
	
Finally, for the fourth term, we employ again Eq. \eqref{simple identity} to derive that
	\begin{align*}
&\int_0^T \Big\{\|v_x\|_{L^2(\rea)}\|v_x\|_{L^\infty(\rea)}\|\theta_{xx}\|_{L^2(\rea)}\Big\}\,{\rm d}t \\
\leq\:& C\int_0^T \Big\{\|v_x\|_{L^2(\rea)}\|v_x\|_{H^1(\rea)}\|\theta_{xx}\|_{L^2(\rea)}\Big\}\,{\rm d}t\\
\leq\:& C\Big(\sup_{0\leq t \leq T}\|v_x\|_{L^2(\rea)}\Big)\Big(\int_0^T\|v_x\|_{H^1(\rea)}\|\theta_{xx}\|_{L^2(\rea)}{\rm d}t\Big)\\
\leq\:& C\sup_{0\leq t\leq T}\int_\rea v_x^2(t,x) \dx +  \Big(\int_0^T\|v_x\|_{H^1(\rea)}\|\theta_{xx}\|_{L^2(\rea)} \dx {\rm d}t\Big)^2\\
\leq\:& C\sup_{0\leq t\leq T}\int_\rea v_x^2(t,x) \dx + C(\epsilon_{17}) \int_0^T \int_\rea \big(v_x^2+v_{xx}^2\big)\,{\rm d}x{\rm d}t + \epsilon_{17}\int_0^T\int_\rea \theta_{xx}^2 \,{\rm d}x{\rm d}t\\
\leq\:&  \epsilon_{17}\int_0^T\int_\rea \theta_{xx}^2 \,{\rm d}x{\rm d}t + C(\epsilon_{17})\big\{1+\supt\big\},
	\end{align*}
which again is based on the Cauchy-Schwartz inequality and the entropy formula \eqref{eq_entropy ineq}.

Therefore, using the previous estimates, we choose suitable $\epsilon_{14}, \epsilon_{15}, \epsilon_{16}$ and $\epsilon_{17}$ to get:
\begin{equation}\label{eq_interm estimate for higher derivative bounds, 4}
\sup_{0\leq t \leq T} \ir \theta_x^2 {\rm d}x + \int_0^T \ir \theta_{xx}^2 {\rm d}x{\rm d}t \leq C_9 \Big(1+ \supt+\supt^3\Big).
\end{equation}

\smallskip
{\bf 5.} Finally we conclude the uniform upper boundedness of $\theta$ in space-time. Notice that, by the Sobolev inequality \eqref{simple identity}, for any $0\leq t \leq T$ there holds
\begin{equation}\label{eq_interm estimate for higher derivative bounds, 5}
\|(\theta-2)_{+}(t,\cdot)\|^2_{C(\rea)} \leq \|(\theta-2)_{+}(t,\cdot)\|_{L^2(\rea)} \|\theta_x(t,\cdot)\|_{L^2(\rea)}.
\end{equation}
Then, using Theorem \ref{thm_crucial estiamte in theta and u, Li Liang type}, it can be deduced that $\|(\theta-2)_{+}(t,\cdot)\|_{L^2(\rea)}\leq \sqrt{C_1}$ for any $t$, while $\|\theta_x(t,\cdot)\|_{L^2(\rea)}$ is estimated by Eq. \eqref{eq_interm estimate for higher derivative bounds, 4}. Hence we get 
\begin{equation*}
\|(\theta-2)_{+} (t,\cdot)\|^2_{C(\rea)} \leq C_{10} \Big(1+ \supt^{\frac{1}{2}}+\supt^\frac{3}{2}\Big).
\end{equation*}
In particular, by comparing the growth rate at infinity, we get:
\begin{equation}\label{eq_uniform upper bound for theta}
\sup_{[0,T]\times \rea}\theta(\cdot,\cdot) \leq C_{11}.
\end{equation}
Thus, putting together the estimates in Eqs. \eqref{eq_interm estimate for higher derivative bounds, 1}\eqref{eq_interm estimate for higher derivative bounds, 2}\eqref{eq_interm estimate for higher derivative bounds, 3}\eqref{eq_interm estimate for higher derivative bounds, 4}\eqref{eq_uniform upper bound for theta}, the proof is complete.

\end{proof}

Based on Lemma \ref{Lemma_bounds on higher derivatives}, we are now ready to establish the global existence and the large-time behaviour of the weak solutions, which are the main results of the paper:

\begin{proof}[Proof of Theorems \ref{Theorem for global existence of weak solution} and \ref{thm_large time behaviour}]

The arguments are divided in three steps. 

\smallskip
{\bf 1.} First, let us prove the large-time behaviour under the temporary assumption $(\clubsuit)$ introduced at the beginning of \S 3, namely that the reaction rate function $\phi \in C^1(\rea) \cap L^\infty(\rea)$. In this case, due to the uniform estimate established in Lemma \ref{Lemma_bounds on higher derivatives}, sending $T \rightarrow \infty$ gives us:
\begin{equation}
\int_0^\infty \Bigg\{\Big|\frac{d}{dt}\|v_x(t,\cdot)\|_{L^2(\rea)} \Big| + \Big|\frac{d}{dt}\|\theta_x(t,\cdot)\|_{L^2(\rea)} \Big| + \Big|\frac{d}{dt}\|Z_x(t,\cdot)\|_{L^2(\rea)} \Big|\Bigg\}\, {\rm d}t \leq C_{12}.
\end{equation}
Hence, we have 
\begin{equation}\label{derivatives tend to zero}
\big\|(v_x, \theta_x, Z_x)(t,\cdot)\big\|_{L^2(\rea)} \longrightarrow 0 \qquad\text{ as } t \rightarrow \infty.
\end{equation}
From here we immediately deduce that
\begin{align}\label{asymptotics for u}
v^2(t,x) &\leq \|v_x(t,\cdot)\|_{L^2(\rea)}\|v(t,\cdot)\|_{L^2(\rea)} \nonumber\\
&\leq \sqrt{2qE_0} \|v_x(t,\cdot)\|_{L^2(\rea)} \longrightarrow 0,
\end{align}
which is valid for any $x\in\rea$, in view of the Sobolev inequality \eqref{simple identity}. 

Next, the asymptotic for $\theta$ is obtained similarly: Thanks to that $\supt \leq 3+C_{10}$ ({\it cf.} Step 5 in the proof of Lemma \ref{Lemma_bounds on higher derivatives}) and Eq. \eqref{estimate on theta-1 square}, we have
\begin{equation}\label{asymptotics for theta}
(\theta(t,x)-1)^2 \leq \|(\theta(t,\cdot)-1)\chi_{\rea \setminus \Sigma_{2}(t)}\|_{L^2(\rea)}\|\theta_x(t,\cdot)\|_{L^2(\rea)} \longrightarrow 0,
\end{equation}
where Dominated Convergence Theorem is used. Also, Lemma \ref{Lemma_bounds on higher derivatives} leads to the following: \begin{equation*}
\sup_{0\leq t \leq T}\ir u_x^2 \,{\rm d}x + \int_0^T\ir \theta u_x^2 \,{\rm d}x{\rm d}t \leq C_5.
\end{equation*}
As we have already established the uniform boundedness of $\theta$, it follows that
\begin{equation*}
\int_0^\infty \Big|\frac{d}{dt}\|u_x(t,\cdot)\|_{L^2(\rea)}\Big| \,{\rm d}t \leq C_{12}.
\end{equation*}

Moreover, the subsequent  result holds:
\begin{equation}\label{asymptotics for v}
\big\{u(t,x) - 1\big\}\longrightarrow 0\qquad \text{ uniformly in space-time}.
\end{equation}
Indeed, by the entropy inequality \eqref{eq_entropy ineq} and the uniform bound on $u$ ({\it cf.} Theorem \ref{thm_uniform upper and lower bds for v}), 
\begin{equation}
\sup_{0\leq t \leq T} \ir (u-1)^2 \,{\rm d}x \leq C\sup_{0\leq t \leq T} \ir \big(u-1-\log(u)\big)\, {\rm d}x \leq C_{13}.
\end{equation}
Thus, via precisely the same arguments for $v$ and $\theta$ as above \eqref{asymptotics for v} is proved.

Finally, to control the combustion term $Z$ (which is specific to our problem), we integrate by parts to derive that
\begin{align}\label{asymptotics for Z}
Z^{\frac{3}{2}} (t,x) &= -\int_x^\infty\big[Z^{\frac{3}{2}}\big]_x(t,\xi) \,{\rm d}\xi \nonumber\\
&\leq \frac{3}{2} \ir Z^{\frac{1}{2}}(t,x) \big|Z_x(t,x)\big|\,{\rm d}x\nonumber\\
&\leq \frac{3}{2}\|Z_x\|_{L^2(\rea)} \Big(\ir Z(t,x)\,{\rm d}x \Big)^{\frac{1}{2}} \longrightarrow 0,
\end{align}
thanks to Eqs. \eqref{derivatives tend to zero} and \eqref{eq_Z inequality}. Therefore, collecting the estimates in Eqs. \eqref{asymptotics for u}\eqref{asymptotics for theta}\eqref{asymptotics for v}\eqref{asymptotics for Z}, we see that the proof for Theorem \ref{thm_large time behaviour} is now complete, provided $\phi\in C^1(\rea)\cap L^\infty(\rea)$.

\smallskip
{\bf 2.}
In this step we establish the uniform lower bound for $\theta$, based on the large-time behaviour established in  Step 1 of the same proof for $C^1$ reaction rate functions.

For this purpose, we first obtain a lower bound for $\theta$ up to some given time $T_\ast >0$ on the compact domain $[-L,L]$ for some finite number $L>0$. Let us denote by $\zeta:=\theta^{-1}$. Then, multiplying $(-\theta^{-2})$ to the temperature equation \eqref{eq_temperature equation}, we arrive at the following evolution equation for $\zeta$:

\begin{equation}\label{eq_evolution for zeta=inverse theta}
\zeta_t +2\kappa \frac{\zeta_x^2}{u\zeta} +  \mu \frac{\zeta^2v_x^2}{u} =  \kappa\frac{\zeta_{xx}}{u}-\kappa \frac{\zeta_x u_x}{u^2} + a\frac{\zeta}{u}v_x - \zeta^2qK\phi(\theta)Z.
\end{equation}
Completing the squares and writing the first two terms on right-hand side in the full divergence form, we obtain that
\begin{equation*}
\zeta_t + 2\kappa\frac{\zeta_x^2}{u\zeta} + \frac{\mu}{u} \Big[ {\zeta v_x} - \frac{a}{2\mu }\Big]^2 + \zeta^2 q K\phi(\theta)Z= \kappa \Big(\frac{\zeta_x}{u}\Big)_x + \frac{a^2}{4\mu u}.
\end{equation*}

Then, we restrict to the finite spatial interval $[-L,L]$, and multiply $(2p\zeta^{2p-1})$ to the previous equation with $p>\frac{3}{2}$. Integrating by parts and using the periodic boundary condition on $[-L,L]$, we arrive at
\begin{equation*}
\frac{d}{dt}\int_{-L}^L \zeta^{2p}(t,x) \dx \leq \frac{a^2p}{2\mu}\int_{-L}^L \frac{\zeta(t,x)^{2p-1}}{u(t,x)} \,{\rm d}x.
\end{equation*}
Now, applying H\"older's inequality together with the uniform lower bound $u \geq C_0^{-1}$ in Theorem \ref{thm_uniform upper and lower bds for v}, one deduces:
\begin{equation}
2p\|\zeta\|_{L^{2p}([-L,L])}^{2p-1} \times \frac{d}{dt}\|\zeta\|_{L^{2p}([-L,L])} \leq 2p \frac{a^2}{4\mu C_0} \big\|\zeta^{2p-1}\big\|_{L^{\frac{2p}{2p-1}}([-L,L])} (2L)^{\frac{1}{2p}}.
\end{equation}
Here, it is crucial to choose $L$ {\em depending on $p$}: Indeed, we take 
\begin{equation}
L=2^{2p-1},
\end{equation}
then $L\rightarrow\infty$ as $p\rightarrow\infty$, while $(2L)^{\frac{1}{2p}} = 2$. The previous estimate thus becomes:
\begin{equation*}
\frac{d}{dt}\|\zeta\|_{L^{2p}([-L,L])} \leq \frac{a^2}{2\mu C_0}, 
\end{equation*}
which is a uniform estimate in $L$ and $p$. Thus, for any fixed $T_{\ast}>0$, we can send $p, L$ to infinity and apply the Gr\"{o}nwall lemma to conclude that 
\begin{equation}
\zeta(t,x) \leq C e^{CT_\ast}.
\end{equation}
Equivalently, we have just established:
\begin{equation}\label{eq_local lower bound for theta}
\inf_{[0,T_\ast]\times\rea} \theta \geq C^{-1}e^{-CT_\ast},
\end{equation}
which is a space-time uniform lower bound for $\theta$ up to time $T_{\ast}$.

Finally, to promote the local (in time) bound to a global bound, we make use of the result in Step 1 above: there we have shown that  $\theta \rightarrow 1$ uniformly as $t \rightarrow\infty$. As a result, choose a $T_\ast \in (0,\infty)$ such that $0.99\leq\theta(t,x)\leq 1.01$ whenever $t \geq T_\ast$ and $x\in\rea$. Thus, together with the {\em local} lower bound of $\theta$ (Eq. \eqref{eq_local lower bound for theta}), we readily conclude the {\em global} lower bound for $\theta$. Now  we are able to conclude the proof of Theorems \ref{Theorem for global existence of weak solution} and \ref{thm_large time behaviour}, subject to the condition $(\clubsuit)$.

\smallskip
{\bf 3.} Finally let us remove the condition $(\clubsuit)$ and establish the theorems for generic discontinuous functions obeying the Arrenhius' law.

For this purpose, we take a discontinuous $\phi$ obeying the Arrhenius Law and mollify it with
\begin{equation}
\phi_\eta(\theta) : = (J_\eta \ast \phi)(\theta),
\end{equation}
where  $J$ is the standard mollifier, namely $J \in C^\infty(\rea), \ir Jdx =1, J_\eta(\theta)= \frac{1}{\eta}J(\frac{\theta}{\eta})$. It always holds that $\phi_\eta(\theta) \rightarrow \phi(\theta)$ in $L^p(\rea)$, for any $p \in [1,\infty)$, as $\eta \rightarrow 0^{+}$. It is crucial to notice that
\begin{equation}
\|\phi_\eta(\theta)\|_{C(\rea)} \leq \|\phi(\theta)\|_{C(\rea)} \leq M \qquad \text{ for any } \eta >0,
\end{equation}
even if the $C^1$-norm of $\phi_\eta(\theta)$ may blow up as $\eta\rightarrow 0^{+}$.
	
	Now, by a careful examination of \S\S 3--5, all the estimates derived therein are independent of $\delta$. Thus, for each $\eta >0$, we argue as in \S\S 3-5 with respect to Eqs. \eqref{eq_mass equation}\eqref{eq_momentum equation}\eqref{eq_temperature equation}\eqref{eq_reaction equation}, with $\phi(\theta)$ replaced by $\phi_\eta(\theta)$. In this manner, we obtain a global weak solution $(u_\eta, v_\eta, \theta_\eta, Z_\eta)$ for each $\eta$, which verifies the uniform estimates independently of $\eta$. Hence, by a standard compactness argument in the Sobolev space $[H^1(\rea)]^4$,  a subsequence (still labelled as $\{(v_\eta, u_\eta, \theta_\eta, Z_\eta)\}_{\eta>0}$) which converges to a global weak solution to Eqs. \eqref{eq_mass equation} -- \eqref{eq_conditions on id}, with the discontinuous reaction rate function $\phi(\theta)$ satisfying the Arrehnius' law. 
	
	Therefore, the proof of Theorems \ref{Theorem for global existence of weak solution} and \ref{thm_large time behaviour} is now complete.

\end{proof}

At the end of the paper we make four concluding remarks:

\begin{enumerate}
\item
	First of all, the physical meaning of the results in this paper is natural: For a one-dimensional reacting mixture on unbounded domains, if the far-field condition  is imposed as in Eq. \eqref{eq_far-field condition}, then the chemical reaction will occur and proceed toward completion as time approaches infinity, regardless of the detailed structure of the reaction-rate function. In this process, the density and temperature of the reacting mixture will be uniformly bounded away from zero and infinity.
\item
	Next, in this work we only consider the global existence of weak solutions,  but we have not addressed the issues of {\em classical} ({\it e.g.} $C^\infty$ or $C^{2,\alpha}$) solutions at all. Indeed, due to the discontinuity of $\phi(\theta)$ (viewed as a part of the coefficients of the evolution equation \eqref{eq_temperature equation}), we should not expect the existence of classical solutions.
\item
The results in the paper can be extended to several other types of boundary conditions. 
 For example, let us consider the domain to be the half line $\Omega = [0, \infty)$, with the same far-field condition at $\infty$:  
\begin{equation*}
\lim_{x\rightarrow \infty} (u,v,\theta, Z) = (1,0,1,0) \qquad \text{ for all } t \geq 0.
\end{equation*}
At $x=0$ we can impose the 
{\bf impermeability + thermal insulation condition (I)}:
\begin{equation}
v(t,0)=0; \qquad \theta_x(t,0)=0; \qquad Z(t,x)=0 \text{ or } Z_x(t,0)=0,
\end{equation}
or the {\bf impermeability + constant source condition (II)}: 
\begin{equation}
v(t,0)=0;\qquad \theta(t,0)=1;\qquad Z(t,x)=0 \text{ or } Z_x(t,0)=0.
\end{equation}
These boundary conditions are also considered in \cite{LL14} for one-dimensional heat-conducting compressible fluids without reaction terms.

\noindent Here, we claim that the same statements for Theorems \ref{Theorem for global existence of weak solution} and \ref{thm_large time behaviour} remain valid, subject to boundary conditions{ \bf (I)} or {\bf (II)}. This can be proved by precisely the same arguments, as long as the integration by parts arguments still hold. Indeed, Eqs. \eqref{by parts 1} \eqref{by parts 2} \eqref{by parts 3} \eqref{by parts add} \eqref{by parts 4} \eqref{by parts 5} and \eqref{by parts 6} remain valid; also, under the condition {\bf (I)}, Eq. \eqref{by parts special} stays the same, while subject to the condition {\bf (II)} we can make simple modifications to  recover Eq. \eqref{eq_interm estimate for higher derivative bounds, 4}. 
 
\item
In the end, we emphasize that the arguments in \cite{KS77}  for the lower boundedness of temperature are not valid on unbounded domains ($u^{-1} \notin L^p(\rea)$ for $p \geq 1$), and the arguments in \cite{LL14} for the large-time behaviour cannot be applied without modifications in presence of the $Z$ term. In our work, new estimates have been  developed to cope with unbounded domains and the chemical reaction terms.
\end{enumerate}





\bigskip
\noindent
{\bf Acknowledgement}
Siran Li's research was supported in part by the UK EPSRC Science and Innovation award to the Oxford Centre for Nonlinear PDE (EP/E035027/1).


\bigskip

\end{document}